\theoremstyle{definition}
\newtheorem{theorem}{Theorem}[section]
\newtheorem{lemma}[theorem]{Lemma}
\begin{document}

\title{Eternally surrounding a robber}

\date{}

\author{
Nancy E.~Clarke\thanks{
Department of Mathematics and Statistics, 
Acadia University, Wolfville, NS, Canada.
Research support by NSERC grant \#2020-06528.
}
\and
Danny Dyer \thanks{
Department of Mathematics and Statistics,
Memorial University of Newfoundland, St.~John's, NL, Canada. Supported by NSERC.
}
\and
William Kellough \thanks{Department of Mathematics and Statistics,
Memorial University of Newfoundland, St.~John's, NL, Canada. Supported by NSERC Canada Graduate Scholarship-Master’s}
}

\maketitle

\begin{abstract}
We introduce the bodyguard problem for graphs. This is a variation of Surrounding Cops and Robber but, in this model, a smallest possible group of bodyguards must surround the president and then maintain this protection indefinitely. We investigate some elementary bounds, then solve this problem for the infinite graph families of complete graphs, wheels, trees, cycles, complete multipartite graphs, and two-dimensional grids. We also examine the problem in more general Cartesian, strong, and lexicographic products.
\end{abstract}

\begin{quote} \small {\bf Keywords:} discrete time graph processes; pursuit-evasion; Surrounding Cops and Robber

\smallskip 

{\bf MSC2020: 05C57, 91A43}
\end{quote}

\normalsize

\section{Introduction}\label{sec: introduction}

Cops and Robber is a pursuit-evasion game played on the vertices of a graph where a set of cops move along edges to capture a robber. 
The classical game was first introduced by Quilliot \cite{Q78} and independently by Nowakowski and Winkler \cite{NW83}. The \emph{cop number} of a graph $G$, introduced by Aigner and Fromme \cite{AF84} and denoted $c(G)$, is the fewest cops that suffice to win Cops and Robber when playing on $G$. For a survey of known results, see \cite{BN11}. 

In Cops and Robber, the game ends when a cop moves to the vertex occupied by the robber. Suppose we were to change the win condition for the cops from capturing the robber once to capturing the robber on all but finitely many turns. After one cop has captured the robber, this new win condition is easy for the cops to satisfy since the robber has no choice but to stay within the closed neighbourhood of the cop that captured him. Therefore the fewest cops needed to win this version of Cops and Robber is the same as the cop number. 

In this paper, we apply this indefinite capturing win condition to a variant of Surrounding Cops and Robber \cite{BCCDFJP20}. In the Surrounding model, introduced in \cite{JSU23}, the cops win by surrounding the robber's position. So ``capturing" means occupying every vertex in the open neighbourhood of the robber's position. Unlike in Cops and Robber, capturing once in this Surrounding game does not guarantee that the cops can capture the robber indefinitely. Consider Figure \ref{fig: ex where indef surr matters}. If the robber is surrounded on $x$ by three cops, when the robber moves to $y$ the three cops cannot surround the robber in one move. Thus more cops are needed to win with this new condition. We call the game where the cops need to indefinitely surround the robber \emph{Bodyguards and Presidents}.

\begin{figure}[hbp]
    \centering
    \begin{tikzpicture}
    \tikzstyle{vertex} = [circle, draw=black, minimum size=20pt,inner sep=0pt]
    \tikzstyle{cop}=[circle, very thick, fill=blue!5, draw=blue, minimum size=20pt,inner sep=0pt]
    \tikzstyle{r}=[circle, very thick, fill=red!5, draw=red, minimum size=20pt,inner sep=0pt]

    \node[r, xshift=-4cm, yshift=-7cm] at (0,0) (x1) {R};
    \node[cop, xshift=-4cm, yshift=-7cm] at (-1.5,1.5) (x2) {C};
    \node[cop, xshift=-4cm, yshift=-7cm] at (-1.5,-1.5) (x3) {C};
    \node[cop, xshift=-4cm, yshift=-7cm] at (1.5,0) (x4) {C};
    \node[vertex, xshift=-4cm, yshift=-7cm] at (3,1.5) (x5) {};
    \node[vertex, xshift=-4cm, yshift=-7cm] at (3,-1.5) (x6) {};

    \node[xshift=-4cm, yshift=-7.65cm] at (0,0) {$x$};
    \node[xshift=-4cm, yshift=-7.65cm] at (1.5,0) {$y$};

    \draw (x2)--(x1)--(x4)--(x5);
    \draw (x1)--(x3);
    \draw (x4)--(x6);

    \node[yshift=-7cm] at (-0.5,0) (tail) {};
    \node[yshift=-7cm] at (2,0) (head) {};
    \draw[ultra thick, ->] (tail)--(head);

    \node[cop, xshift=4cm, yshift=-7cm] at (0,0) (x1) {CC};
    \node[vertex, xshift=4cm, yshift=-7cm] at (-1.5,1.5) (x2) {};
    \node[vertex, xshift=4cm, yshift=-7cm] at (-1.5,-1.5) (x3) {};
    \node[r, xshift=4cm, yshift=-7cm] at (1.5,0) (x4) {R};
    \node[cop, xshift=4cm, yshift=-7cm] at (3,1.5) (x5) {C};
    \node[vertex, xshift=4cm, yshift=-7cm] at (3,-1.5) (x6) {};

    \node[xshift=4cm, yshift=-7.65cm] at (0,0) {$x$};
    \node[xshift=4cm, yshift=-7.65cm] at (1.5,0) {$y$};

    \draw (x2)--(x1)--(x4)--(x5);
    \draw (x1)--(x3);
    \draw (x4)--(x6);
    \draw[ultra thick, blue, ->] (x2)--(x1);
    \draw[ultra thick, blue, ->] (x3)--(x1);
    \draw[ultra thick, violet, ->] (x1)--(x4);
    \draw[ultra thick, blue, ->] (x4)--(x5);
    
    \end{tikzpicture}
    \caption{A graph that needs many cops to cover a robber's neighbourhood as the robber moves.}
    \label{fig: ex where indef surr matters}
\end{figure}

Bodyguards and Presidents is a pursuit-evasion game played on the vertices of a graph. One player controls a set of tokens called bodyguards while the other player controls a token called the president. The game begins with the bodyguard tokens being placed on vertices followed by the president token. Both players take turns moving their tokens. During a player's turn, for each token they control, they are allowed to either move that token to an adjacent vertex or have that token stay at the vertex it currently occupies. The bodyguards win if, after some finite number of turns, they can surround the president at the end of every bodyguard turn that follows; that is, the bodyguards win if, for all but finitely many turns, there is a bodyguard on every vertex in the open neighbourhood of the president's position. The president wins otherwise. Following \cite{BN11}, we use she/her for the bodyguards and he/him for the president. In Bodyguards and Presidents, the bodyguards' goal does not resemble the act of stopping someone that is ``on the loose'' as in Cops and Robber. When playing to win, the bodyguards' behaviour suggests the evader is an important entity they aim to protect by constantly surrounding him. This is why we have chosen to use the term ``bodyguards" for the pursuers and ``president" for the evader.

For a given graph $G$ the \emph{bodyguard number} of $G$, denoted $B(G)$, is the fewest bodyguards that suffice to win on $G$. If $G$ is disconnected, then $B(G)$ is equal to the sum of the bodyguard numbers of its components. Thus, for many of our results, we only consider the case where $G$ is connected.

Bodyguards and Presidents was developed to obtain winning strategies for the cops in two variants, Surrounding Cops and Robbers \cite{ADG24, BCCDFJP20, BH19, CCHHM25, JSU23} and Cops and Cheating Robots \cite{CDK25, HN21}, when playing on the strong product $G\boxtimes H$. If we want to restrict the robber from accessing a subgraph of $G\boxtimes H$ that is isomorphic to $G$, then the cops must place themselves on that subgraph such that the closed neighbourhood of the robber's ``shadow" on the subgraph is indefinitely occupied by cops. Then we can take a winning strategy for the cops on $H$ and translate it into a winning strategy for the cops on $G\boxtimes H$. In \cite{CDK25, K24}, this technique is used to obtain tight upper bounds on the cop number of $G\boxtimes H$ and $\boxtimes^k_{i=1} P_{n_i}$ in the Surrounding Cops and Robbers and Cops and Cheating Robots models. It can also obtain exact values for the cheating robot numbers of $K_m \boxtimes P_n$, $K_m \boxtimes C_n$, and $K_m \boxtimes K_n$ for admissible $m$ and $n$. For more information, see \cite{CDK25, K24}. 

Studying this model is further justified by its potential applications. Bodyguards and Presidents, unlike Cops and Robber, acts as a model for indefinite surveillance. Consider a graph of bank accounts with each vertex representing an individual's bank account and edges representing two bank accounts that can have large transactions occur between them. If laundered money needed to be traced, then it would be useful to monitor the whole neighbourhood of an account with the laundered money and to continue doing so as the money moves from one account to another. 

In Section \ref{sec: prelim results}, we give some introductory results on the bodyguard number and in Section \ref{sec: graph families} we determine the bodyguard number for some families of graphs. In Section \ref{sec: char for n-1} we give a complete characterization for graphs with maximum bodyguard number. In Section \ref{sec: Cartesian prod} we discuss how the bodyguard number behaves with respect to the Cartesian product and give bounds on the bodyguard number for $k$-dimensional Cartesian grids. In Section \ref{sec: other prods} we discuss the bodyguard number of strong and lexicographic products. We end with open questions and directions for future work in Section \ref{sec: further directions}.

\section{Preliminary Results}\label{sec: prelim results}

We begin with an upper bound on the bodyguard number. 

\begin{lemma}\label{lem: trivial upper bound on B(G)}
    If $G$ is a graph on $n$ vertices, then $B(G) \leq n-1$.
\end{lemma}

\begin{proof}
    At the beginning of the game, place $n-1$ bodyguards on the graph such that only one vertex does not have a bodyguard. Regardless of where the president is placed, the bodyguards respond by making the president's vertex the only one not occupied by a bodyguard. From here, whenever the president moves to a vertex with a bodyguard on it, that bodyguard moves to the vertex the president came from. Thus the president is surrounded indefinitely.
\end{proof}

From Lemma \ref{lem: trivial upper bound on B(G)}, we have that the bodyguard number is well-defined. The strategy described in the proof of Lemma \ref{lem: trivial upper bound on B(G)} demonstrates how the game can be thought of as occurring in two phases: first, the bodyguards move to surround the president; and second they move to eternally surround him thereafter. 

Next, a trivial lower bound on the bodyguard number of a graph $G$ in terms of the maximum degree of the graph $\Delta(G)$.

\begin{lemma}\label{lem: trivial lower bounds on bodyguard num}
    For any graph $G$, $B(G) \geq \Delta(G)$.
\end{lemma}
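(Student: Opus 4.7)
The plan is to exhibit a president strategy that forces at least $\Delta(G)$ bodyguards to be in play whenever the eternal surrounding condition is satisfied. The key observation is that surrounding the president at a vertex $v$ requires placing bodyguard tokens on every vertex of $N(v)$, which is a set of $\deg(v)$ \emph{distinct} vertices.

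First, I would fix a vertex $v \in V(G)$ with $\deg(v) = \Delta(G)$. Since the president places his token after the bodyguards, he may place himself on $v$. I would then have him adopt the trivial strategy of never moving off $v$, so that the president's position is $v$ on every turn of the game.

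Next, I would examine what is required of the bodyguards on any turn on which they succeed in surrounding the president. By definition of the Surrounding model, on such a turn each of the $\Delta(G)$ vertices in $N(v)$ must be occupied by at least one bodyguard. Since these $\Delta(G)$ vertices are pairwise distinct, this requires at least $\Delta(G)$ bodyguard tokens in total, regardless of whether multiple bodyguards are allowed to share a vertex.

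Finally, if the bodyguards win, then by definition there exists at least one turn (indeed, all but finitely many turns) on which they surround the president. Under the president's stay-put strategy, any such turn forces the condition above, so at least $\Delta(G)$ bodyguards must have been deployed. Hence $B(G) \geq \Delta(G)$. No real obstacle arises here; the proof is essentially just unpacking the definition of ``surround'' against the simplest possible president strategy.
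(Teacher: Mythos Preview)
Your proof is correct and follows essentially the same approach as the paper: have the president sit on a maximum-degree vertex and observe that surrounding him there requires a bodyguard on each of the $\Delta(G)$ distinct neighbours. The paper's proof is a one-sentence version of the same argument.
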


\begin{proof}
    If the president stays on the vertex of maximum degree, the bodyguards cannot win without first surrounding the president on that vertex. 
\end{proof}

In Cops and Robber, the cop number of a graph $G$ can be less than the cop number of a subgraph $H$ of $G$. This can also happen with the bodyguard number.

\begin{theorem}\label{thm: bodyguard num not monotonic wrt subgraphs}
    There exists a graph $G$ with a connected subgraph $H$ such that $B(H) > B(G)$.
\end{theorem}

\begin{proof}
    Let $H$ be the graph in Figure \ref{fig: ex where indef surr matters} and let $G$ be the graph in Figure \ref{fig: bodyguard num bigger in subgraph}. From the discussion in Section \ref{sec: introduction}, $B(H) > 3$. 

    Let $b_1$, $b_2$ and $b_3$ be the three bodyguards. Consider the two positions $b_1$ at $v_1$, $b_2$ at $v_5$ and $b_3$ at $v_6$; and $b_1$ at $v_4$, $b_2$ at $v_2$ and $b_3$ at $v_3$. We call the positions $A_1$ and $A_2$ respectively. In Figure \ref{fig: bodyguard num bigger in subgraph}, $A_1$ is illustrated on the left and $A_2$ is illustrated on the right. Note that one of $A_1$ or $A_2$ surround each vertex, and the 
    bodyguards can move from $A_1$ to $A_2$ (and vice versa) in one move. Therefore, the bodyguards can surround the president every round indefinitely. So $B(G) \leq 3$. 
\end{proof}

\begin{figure}[htb]
        \centering
        \begin{tikzpicture}[scale=0.75]
        \tikzstyle{vertex} = [circle, draw=black, minimum size=20pt,inner sep=0pt]
        \tikzstyle{bodyguard}=[circle, very thick, fill=blue!5, draw=blue, minimum size=20pt,inner sep=0pt]
        
        \node[bodyguard] at (0,0) (v1) {$b_1$};
        \node[vertex] at (-2,2) (v2) {};
        \node[vertex] at (-2,-2) (v3) {};
        \node[vertex] at (2,0) (v4) {};
        \node[bodyguard] at (4,2) (v5) {$b_2$};
        \node[bodyguard] at (4,-2) (v6) {$b_3$};

        \node[yshift=-0.75cm] at (0,0) {$v_1$};
        \node[yshift=-0.75cm] at (-2,2) {$v_2$};
        \node[yshift=-0.75cm] at (-2,-2) {$v_3$};
        \node[yshift=-0.75cm] at (2,0) {$v_4$};
        \node[yshift=-0.75cm] at (4,2) {$v_5$};
        \node[yshift=-0.75cm] at (4,-2) {$v_6$};

        \draw (v5)--(v2)--(v1)--(v4)--(v5);
        \draw (v1)--(v3)--(v6);
        \draw (v4)--(v6);

        \node[] at (4.5,0) (tail) {};
        \node[] at (7,0) (head) {};
        \draw[ultra thick, <->] (tail)--(head);

        \node[vertex, xshift=7cm] at (0,0) (u1) {};
        \node[bodyguard, xshift=7cm] at (-2,2) (u2) {$b_2$};
        \node[bodyguard, xshift=7cm] at (-2,-2) (u3) {$b_3$};
        \node[bodyguard, xshift=7cm] at (2,0) (u4) {$b_1$};
        \node[vertex, xshift=7cm] at (4,2) (u5) {};
        \node[vertex, xshift=7cm] at (4,-2) (u6) {};

        \draw (u5)--(u2)--(u1)--(u4)--(u5);
        \draw (u1)--(u3)--(u6);
        \draw (u4)--(u6);
        \end{tikzpicture}
        \caption{The bodyguard winning strategy described in the proof of Theorem \ref{thm: bodyguard num not monotonic wrt subgraphs}.}
        \label{fig: bodyguard num bigger in subgraph}
\end{figure}

Recall that an induced subgraph $H$ of $G$ is a \emph{retract} if there exists a homomorphism $f: V(G) \to V(H)$ such that for every $v\in V(H)$, $f(v) = v$. In this case, $f$ is called the \emph{retraction map}. Beraducci and Intrigila \cite{BI93} proved that the cop number is monotonic with respect to retracts. We now show that the same holds for the bodyguard number.

\begin{theorem}\label{thm: Bodyguard number of retracts}
    If $H$ is a retract of a graph $G$, then $B(H) \leq B(G)$.
\end{theorem}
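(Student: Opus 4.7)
The plan is to use a shadow/mirror strategy, following the classical Berarducci–Intrigila argument for cop number. Let $k = B(G)$ and fix a winning bodyguard strategy $\sigma$ on $G$. I will exhibit a winning strategy for $k$ bodyguards on $H$ by secretly playing a virtual game on $G$ alongside the real game on $H$, then projecting virtual positions back to $H$ via the retraction $f : G \to H$.

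The setup goes as follows. Before the president places himself, I imagine $k$ virtual bodyguards in $G$, placed according to $\sigma$, and place each real bodyguard on $H$ at the image under $f$ of the corresponding virtual one. When the president places himself on a vertex $v_p \in V(H) \subseteq V(G)$, I let the virtual president start at the same vertex in $G$. Each turn, whenever the real president moves from $r$ to $r'$ in $H$, the virtual president mirrors the move in $G$; this is legal because every edge of $H$ is an edge of $G$. The virtual bodyguards then respond in $G$ according to $\sigma$, and each real bodyguard moves to the $f$-image of the new virtual position of its partner.

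The two things to check are legality and the win condition. Legality is immediate from $f$ being a homomorphism: if a virtual bodyguard moves from $u$ to $u'$ in $G$ (with $u = u'$ or $uu' \in E(G)$), then the real bodyguard moves from $f(u)$ to $f(u')$, and either $f(u)=f(u')$ (stay in place) or $f(u)f(u') \in E(H)$. For the win condition, $\sigma$ guarantees that for all but finitely many turns, every vertex of $N_G(r)$ is occupied by a virtual bodyguard. Fix such a turn and let $v \in N_H(r)$; since $H$ is an induced subgraph of $G$ (as $H$ is a retract), we have $v \in N_G(r)$, so some virtual bodyguard sits at $v$. The corresponding real bodyguard is at $f(v) = v$ because $f$ is the identity on $V(H)$. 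Hence $N_H(r)$ is entirely covered by real bodyguards, so the real bodyguards surround the president in $H$ on every such turn, winning the game on $H$.

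The main obstacle worth naming is precisely the surround-projection step: projecting the virtual position via $f$ could, a priori, collapse a virtual bodyguard sitting on a neighbor of the president down to a vertex outside $N_H(r)$, or even onto $r$ itself, which would be useless for surrounding. What rescues this is the defining property $f|_{V(H)} = \mathrm{id}_{V(H)}$ of a retract: any virtual bodyguard already located on a vertex of $V(H)$ is fixed by $f$, so the real bodyguards automatically inherit coverage of all of $N_H(r) = N_G(r) \cap V(H)$. This gives $B(H) \leq B(G)$.
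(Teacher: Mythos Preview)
Your proof is correct and follows essentially the same shadow/mirror strategy as the paper: run a virtual game on $G$ using a winning bodyguard strategy, project the bodyguard positions to $H$ via the retraction, and use the fact that $f$ fixes $V(H)$ to conclude that coverage of $N_G(r)$ in the virtual game yields coverage of $N_H(r)$ in the real game. Your write-up is in fact more careful than the paper's about why the surround condition survives projection (isolating the key point that $f|_{V(H)} = \mathrm{id}$ forces any virtual bodyguard already on a vertex of $N_H(r)$ to stay put), but the underlying argument is the same.
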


\begin{proof}

Let $r:V(G)\to V(H)$ be a retraction map from $G$ to $H$. We will show that $B(G)$ bodyguards have a winning strategy on $H$.

The president is playing on $H$, but the bodyguards consider a parallel game on $G$, where they have a winning strategy. On $H$, they play the images under $r$ of these winning moves on $G$. Suppose the president is surrounded in the parallel game in $G$ on the vertex $v\in V(H) \subseteq V(G)$. So $r(v) = v$. Since $r$ is a retraction map, for every vertex $u\in N_H(v) \subseteq N_G (v)$, $r(u) = u \in N_H(r(v)) = N_H (v)$ and so $N_H(v) \subseteq \cup_{u\in N_G(v)} r(u)$. Thus, if the president is surrounded on $G$, the president's image is surrounded on $H$, and since the image and position coincide in a retraction, the president is surrounded on $H$. Since the bodyguards surround the president for an infinite number of consecutive turns on $G$, they do the same on $H$. \qedhere
\end{proof}

\section{Characterization of Graphs with Maximum Bodyguard Number}\label{sec: char for n-1}

From Lemma \ref{lem: trivial upper bound on B(G)}, we know that for all graphs on $n$ vertices, the bodyguard number is at most $n-1$. In this section, we prove that this upper bound is reached if and only if the maximum degree is $n-1$. Lemma \ref{lem: trivial lower bounds on bodyguard num} gives one direction of the proof. We will prove the other direction holds for disconnected graphs, graphs with cut vertices, graphs with diameter two, and any graph that does not have any of these properties. 

\begin{lemma}\label{lem: disconnected implies B(G) < n-1}
    If $G$ is a disconnected graph on $n$ vertices, then $B(G) \leq n-2$.
\end{lemma}

\begin{proof}
    Let $G$ be disconnected with components $H_1, \dots, H_\ell$ where $\ell > 1$. Let $n_i = |V(H_i)|$ for each $1\leq i\leq \ell$. By Lemma \ref{lem: trivial upper bound on B(G)}, we have \[B(G) = \sum_{i=1}^\ell B(H_i) \leq \sum_{i=1}^\ell (n_i - 1) = n - \ell \leq n-2.\] 
\end{proof}

\begin{lemma}\label{lem: cut vertex and max deg < n-1 implies B(G) < n-1}
    If $G$ is a connected graph on $n$ vertices with a cut vertex and $\Delta(G) \leq n-2$, then $B(G) \leq n-2$. 
\end{lemma}

\begin{proof}
    Let $u$ be a cut vertex of $G$. Since $\Delta(G) \leq n-2$, pick $x\in V(G)$ not adjacent to $u$. Let $H_1, \dots, H_k$ be the components of $G-u$ and, without loss of generality, assume that $x \in V(H_1)$. 

Place a bodyguard on every vertex except for $u$ and $x$. If the president starts on $u$, the bodyguards do not move. If the president starts on $x$ then a bodyguard from $H_2$  moves to $u$. If the president starts on $v\in V(H_1) \backslash\{x\}$, then there exists a path $v = v_1, v_2, \dots, v_m = x$ that does not contain $u$ since $H_1$ is a component of $G-u$. The bodyguards on each $v_i$ move to $v_{i+1}$ for all $1\leq i\leq m-1$ and a bodyguard on $H_2$ moves onto $u$. If the president starts on any component other than $H_1$, then a bodyguard on $H_1$ moves to $u$. With this strategy the president is surrounded by the end of the first bodyguard turn.
    
For the rest of the game,the bodyguards respond to the president's moves in the following way. If the president moves onto $u$, then the bodyguards move so that $x$ and $u$ are the only vertices that are not occupied by a bodyguard. If the president moves from $u$ onto $H_1$, then there exists a path $x_1, x_2, \dots, x_r = x$ where $x_1$ is the vertex the president moved onto. The bodyguards on each $x_i$ move to $x_{i+1}$ for each $1\leq i\leq r-1$ and a bodyguard on $H_2$ moves to $u$. If the president moves from a vertex in $H_1$ to another vertex in $H_1$ then the bodyguards follow the strategy from the proof of Lemma \ref{lem: trivial upper bound on B(G)}. If the president moves from $u$ onto $H_i$ for any $i\neq 1$, then a bodyguard from $H_1$ moves onto $u$. If the president moves from a vertex in $H_i$ to another vertex in $H_i$ where $i\neq 1$ then the bodyguards do not move. Thus, the president is surrounded at the end of every subsequent bodyguard turn.
\end{proof}

\begin{lemma}\label{lem: diam(G) = 2 no uni vertex implies B(G) < n-1}
    If $G$ is connected on $n\geq 3$ vertices with $\text{diam}(G) = 2$ and $\Delta(G) \leq n-2$, then $B(G) \leq n-2$. 
\end{lemma}

\begin{proof}
    Let $x$ and $y$ be two nonadjacent vertices. Place a bodyguard on every vertex except $x$ and $y$. First, we show that the bodyguards can surround the president by the end of their first move. If the president starts on $x$ or $y$, then he will already be surrounded. Suppose the president starts on a vertex $v_p \in V(G) \backslash \{x,y\}$.

    There are three cases: the president is adjacent to exactly one, neither, or both of $x$ and $y$. If the president is adjacent to $x$ and not $y$ (without loss of generality), then the bodyguard on $v_p$ moves to $x$ to surround the president.  If the president is adjacent to neither $x$ nor $y$ then the president is already surrounded.

    Suppose instead that the president is adjacent to both $x$ and $y$. Since $\Delta(G) < n-1$ by assumption, there exists $w \in V(G) \backslash \{x,y\}$ that is not adjacent to $v_p$. Since $\text{diam}(G) = 2$, $d(w,x) \in \{1,2\}$ and $d(w,y) \in \{1,2\}$. If $w$ is adjacent to $x$, move the bodyguard from $w$ to $x$ and the bodyguard from $v_p$ to $y$ to surround the president. If $w$ is not adjacent to $x$ and is adjacent to $y$, move the bodyguard from $w$ to $y$ and the bodyguard from $v_p$ to $x$ to surround the president. Suppose instead that $d(w,x) = d(w,y) = 2$. Since $w$ is not adjacent to $v_p$ and $x$ is not adjacent to $y$, there exists a $w$-$x$ path on three vertices that contains neither $v_p$ nor $y$. Let $z$ be the third vertex in this $w$-$x$ path. The bodyguard on $z$ can move to $x$, the bodyguard on $w$ can move to $z$, and the bodyguard on $v_p$ can move to $y$ to surround the president in one bodyguard turn.

    Therefore, regardless of where the presidents starts the game, the bodyguards can surround him in one move. Let $x^\prime$ and $y^\prime$ be the two vertices that are not occupied by bodyguards at the end of the first bodyguard move. Now consider the position of the president by the end of his first move. The president will either be adjacent to exactly one, neither, or both of $x^\prime$ and $y^\prime$, and the bodyguards can repeat the same strategy as above. By induction, the $n-2$ bodyguards can indefinitely surround the president. 
\end{proof}

\begin{lemma}\label{lem: diam >= 3 case for B(G) = n-1 char.}
    If $G$ is a connected graph on $n$ vertices with no cut vertices and $\text{diam}(G) \geq 3$, then $B(G) \leq n-2$.
\end{lemma}

\begin{proof}
    Since $\text{diam}(G) \geq 3$, there exists two vertices $x,y \in V(G)$ that are distance at least three apart. Begin by placing a bodyguard on every vertex except $x$ and $y$. We show that the president can be surrounded such that no bodyguard is on the same vertex as the president by considering three cases based on where the president begins.

    \textbf{Case (i):} The president is on either $x$ or $y$. Then the president is surrounded, and the bodyguards do not move.
    
    \textbf{Case (ii):} The president is outside of the closed neighbourhoods of $x$ and $y$. Without loss of generality, assume that $x$ is at most as close to the president as $y$. Let $v_1, v_2, \dots, v_m$ be a path in $G$ such that $v_1$ is the vertex occupied by the president and $v_m = x$. Let $b_i$ be the bodyguard on $v_i$. On the bodyguards' first turn, for each $1\leq i\leq m-1$ the bodyguard $b_i$ moves from $v_i$ to $v_{i+1}$. This results in a bodyguard on every vertex in the president's closed neighbourhood except for the vertex he is occupying.

    \textbf{Case (iii):} The president is adjacent to exactly one of $x$ or $y$. Without loss of generality, assume the president is adjacent to $x$. Since $d(x,y) \geq 3$, every vertex in the president's closed neighbourhood besides $x$ is occupied by a bodyguard. On the first bodyguard turn, the bodyguard on the president's vertex moves to $x$. 

    So the president can be surrounded in one move such that no bodyguard is occupying the president's vertex. Now consider the $k$-th president move where $k\geq 1$. Every move the president makes from this position has
    three cases; either the president does not move, the president moves to a
    vertex that is adjacent to exactly one vertex that is not occupied by a bodyguard, or he moves to a vertex that is adjacent to two vertices that are not occupied by bodyguards. Let $v_i$ be the vertex the president starts his turn on and let $v_f$ be the vertex the president ends his turn on.

    If $v_i = v_f$, then the bodyguards do not move. If $v_i$ is the only vertex adjacent to $v_f$ that contains a bodyguard, then the bodyguard on $v_f$ moves to $v_i$ while the rest of the bodyguards do not move. 
    
    Finally, suppose instead that $v_f$ is adjacent to a second vertex other than $v_i$, call it $w$, that is not occupied by a bodyguard.  Since $\text{diam}(G) \geq 3$, $G$ has no universal vertices and hence there exists a vertex $q\notin N[v_f]$ that contains a bodyguard since $v_f$ is not universal. Without loss of generality, suppose that $d(q,w) \leq d(q, v_i)$. Since $G$ has no cut vertices, $G$ is $2$-connected. By Menger's Theorem \cite{M27}, there exists at least two internally disjoint $q$-$w$ paths. Let $P_1$ and $P_2$ be two internally disjoint $q$-$w$ paths. At most one of $P_1$ and $P_2$ contains $v_f$. Without loss of generality, assume that $P_2$ does not contain $v_f$. Let $q= x_1, x_2, \dots, x_m = w$ be the vertices of the path $P_2$. If $P_2$ does not contain $v_i$, then the bodyguards on $x_j$ can move to $x_{j+1}$ for each $1\leq j\leq m-1$ and the bodyguard at $v_f$ can move to $v_i$ to surround the president. If $P_2$ contains $v_i$, then $v_i = x_k$ for some $1\leq k\leq m-1$. In this case, the bodyguards on $x_j$ can move to $x_{j+1}$ for each $1\leq j\leq k-1$ and the bodyguard on $v_f$ can move to $w$ to surround the president. 

    Therefore, by continuing this strategy, the president is indefinitely surrounded. 
\end{proof}

We now obtain the characterization of graphs with maximum bodyguard number. 

\begin{theorem}\label{thm: char for B(G) = n-1}
    For any graph $G$ on $n$ vertices, $B(G) = n-1$ if and only if $\Delta(G) = n-1$. 
\end{theorem}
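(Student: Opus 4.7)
The plan is to establish both directions of the biconditional, with the nontrivial content lying in the reverse implication.

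\textbf{Forward direction.} If $\Delta(G) = n-1$, Lemma~\ref{lem: trivial lower bounds on bodyguard num} yields $B(G) \geq n-1$, while the $n-1$ bodyguard strategy sketched in the introduction shows $B(G) \leq n-1$, giving $B(G) = n-1$.

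\textbf{Reverse direction (contrapositive).} I assume $\Delta(G) \leq n-2$ and exhibit a winning strategy for $n-2$ bodyguards. The degree bound guarantees a pair of non-adjacent vertices $u, w$, so I initially place the $n-2$ bodyguards on $V(G) \setminus \{u,w\}$. The bodyguards will then try to maintain the invariant that, at the end of each of their turns, they occupy exactly $V(G) \setminus \{p, v\}$, where $p$ is the president's position and $v \neq p$ satisfies $v \not\sim p$. This invariant immediately implies the president is surrounded. After the president's placement, at most one preparatory bodyguard turn is used to reach an invariant-satisfying configuration.

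The central task is showing the invariant is preserved after each president move $p \to p' \in N[p]$. If $p' = p$, the bodyguards do nothing. If $p' \neq p$, then $p' \neq v$ (since $v \not\sim p$), so $p'$ previously held a bodyguard. In the easy case $v \not\sim p'$, the bodyguard at $p'$ slides to $p$ and the invariant holds with the same hole. The main case is $v \sim p'$: one must choose a new hole $v' \in V(G) \setminus N[p']$, which is nonempty because $\deg(p') \leq n-2$, and rearrange from $V(G) \setminus \{p,v\}$ to $V(G) \setminus \{p', v'\}$ in a single turn. If $v' \in N(p) \setminus N[p']$ can be chosen, then the two slides $p' \to v$ and $v' \to p$ suffice; symmetrically, $v' \in N(v) \setminus N[p']$ admits $p' \to p$ and $v' \to v$. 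The remaining sub-case $N(p) \cup N(v) \subseteq N[p']$ I expect to handle with a longer chain of simultaneous bodyguard slides along a path from $v$ to some $v' \notin N[p']$ in the subgraph $G - \{p, p'\}$, combined with the single slide $p' \to p$.

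The hard part will be verifying that this chain-slide always succeeds, which reduces to showing the existence of a hole $v' \in V(G) \setminus N[p']$ lying in the same connected component as $v$ in $G - \{p, p'\}$. I anticipate this will require either carefully choosing the initial non-adjacent pair $\{u,w\}$ (for example, involving a maximum-degree vertex of $G$ so that the invariant never drifts into a structurally irrecoverable state), or a Hall-type argument on the bipartite reachability graph whose left vertices are the current bodyguard positions, whose right vertices are the target positions, and whose edges record the permissible single-step moves, to certify that the required simultaneous rearrangement exists.
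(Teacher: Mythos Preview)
Your framework—maintain exactly two unoccupied vertices, the president's position $p$ and a ``hole'' $v$ with $v\not\sim p$—is precisely the invariant the paper works with after its initial setup, and your case split for the maintenance step mirrors the paper's. So the overall approach is the same.

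Where you leave an acknowledged gap is exactly where the paper does its real work, and its resolution is different from either of your two suggestions. The paper never uses a long chain-slide or a Hall argument. Instead it strengthens the invariant from the outset: the $n-2$ bodyguards are first placed on every pendant vertex (there are at most $n-2$ of them since $G$ has no universal vertex), and thereafter the auxiliary condition maintained is that among pendants other than the president's vertex, at most one is unoccupied. With this extra bookkeeping the paper argues, in your hard sub-case, that the old position $p$ and the old hole $v$ cannot both be pendants, and hence one of them has a neighbour $q\notin N[p']$ carrying a bodyguard; the update is then just the two slides $p'\to$ (one of $p,v$) and $q\to$ (the other), with $q$ becoming the new hole. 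So the missing ingredient you are looking for is this pendant condition, and the repair never requires more than two moving bodyguards.

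Two smaller remarks. Your ``at most one preparatory turn'' is optimistic: the paper's setup splits into five cases, and its Case~(v) can require two bodyguard turns before the invariant is established. And your component condition in $G-\{p,p'\}$ can genuinely fail for configurations satisfying your bare invariant (take $p$ a leaf attached to $p'$ and $N(v)\subseteq N[p']$, while the vertices outside $N[p']$ lie in a different component of $G-\{p,p'\}$), so the chain-slide alone will not close the gap without the pendant invariant or a separate argument that such configurations are unreachable under optimal play.
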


\begin{proof}
    If $\Delta(G) = n-1$ then by Lemmas \ref{lem: trivial upper bound on B(G)} and \ref{lem: trivial lower bounds on bodyguard num}, $B(G) = n-1$. Suppose instead that $\Delta(G) < n-1$. 

   Since $\Delta(G) < n-1$,  $\text{diam}(G) \geq 2$. If either $G$ is disconnected, $G$ has a cut vertex, or $\text{diam}(G) = 2$ then by Lemmas \ref{lem: disconnected implies B(G) < n-1}, \ref{lem: cut vertex and max deg < n-1 implies B(G) < n-1}, and \ref{lem: diam(G) = 2 no uni vertex implies B(G) < n-1} the result holds. If $G$ is connected, has no cut vertices, and $\text{diam}(G) \geq 3$, then the result holds by Lemma \ref{lem: diam >= 3 case for B(G) = n-1 char.}. 
\end{proof}

\section{Some Common Graph Families}\label{sec: graph families}

In this section we evaluate the bodyguard number for some well-known graph families. We note that the characterization in Theorem \ref{thm: char for B(G) = n-1} gives the bodyguard number for many classic graph families including stars, complete graphs, and wheels. Here we discuss the bodyguard numbers for cycles, complete $k$-partite graphs, and trees. 

\begin{theorem}\label{thm: Bodyguard numbers of cycles}
    If $n\geq 3$, then
    \begin{equation*}
        B(C_n) = \left\{
        \begin{array}{cc}
             2 & \text{if $n\leq 5$} \\
             3 & \text{if $n>5$.}
        \end{array}
        \right.
    \end{equation*}
\end{theorem}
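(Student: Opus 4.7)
The plan is to establish the matching bounds for each range of $n$ separately.

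The lower bound $B(C_n) \ge 2$ is immediate from Lemma~\ref{lem: trivial lower bounds on bodyguard num} since $\Delta(C_n) = 2$, and the case $n = 3$ follows from Theorem~\ref{thm: char for B(G) = n-1} because $\Delta(C_3) = 2 = n - 1$. For $n \in \{4, 5\}$, I would prove $B(C_n) \le 2$ by placing the two bodyguards at $v_0$ and $v_2$ and verifying, by a short case-analysis on the president's initial vertex, that the bodyguards can always reach a surround configuration in at most one move. For the maintenance phase, once the bodyguards occupy the open neighbourhood of the president his only options are to stay or to step onto an adjacent bodyguard's vertex; in either case, a one-step shift by the bodyguards restores the surround.

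For $n \ge 6$, the upper bound $B(C_n) \le 3$ would follow from a three-bodyguard strategy. The maintenance invariant is that the bodyguards occupy three consecutive vertices $v_{p-1}, v_p, v_{p+1}$ with the president at the middle vertex $v_p$; this immediately implies the surround, and a president move onto either flanking bodyguard can be answered by translating the entire formation one vertex in the same direction, preserving the invariant. To reach the invariant, place the bodyguards at three consecutive vertices initially and employ a pincer: one outer bodyguard walks around the cycle toward the president from one direction while the pair of remaining bodyguards walk together toward the president from the other direction. The length of the arc from the lone bodyguard through the president to the leading bodyguard of the pair shrinks by two per bodyguard turn regardless of the president's movement, since his moves only redistribute this arc without altering its length. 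After at most $O(n)$ turns the arc has length two, the president is surrounded, and then the trailing bodyguard of the pair slides into the middle position to complete the invariant.

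The main obstacle is the lower bound $B(C_n) \ge 3$ for $n \ge 6$: I need to describe a president strategy that defeats any play by two bodyguards. The crucial observation is that two bodyguards can simultaneously occupy both open-neighbours of a single vertex only when they lie at cycle-distance exactly two with that vertex between them. Splitting on the initial cycle-distance $d$ between the bodyguards: when $d \ge 3$, the president places on one bodyguard's vertex, so that the other bodyguard is too far to help cover either of the president's neighbours in a single move, and I would exhibit a short periodic president trajectory (essentially stepping on and off a bodyguard) that perpetually maintains this non-surround state; when $d \in \{1, 2\}$, the president places at a vertex far from both bodyguards and, as the bodyguards approach, steps onto an adjacent bodyguard's vertex just before being surrounded, disrupting the bodyguards' alignment and returning the game to an earlier non-surround configuration. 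The chief technical difficulty will be packaging these case analyses into a uniform argument and verifying that for every bodyguard response the president can respond to keep at least one of his neighbours uncovered at the next bodyguard-turn end, thereby producing an infinite sequence of non-surround bodyguard turns.
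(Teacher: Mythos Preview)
Your upper-bound arguments are fine and track the paper closely (the paper handles $C_3,C_4$ via the retraction $C_k\to C_{k-1}$ rather than a separate case check, and for $n\ge 6$ it invokes a winning cop strategy on $C_n$ twice instead of your explicit pincer, but these are cosmetic differences). The maintenance phase is identical.

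The place where your proposal diverges substantively is the lower bound $B(C_n)\ge 3$ for $n\ge 6$, and here you are making it much harder than it needs to be. Splitting on the initial cycle-distance between the two bodyguards, putting the president on a bodyguard's vertex, and then looking for a periodic dodging trajectory is delicate: the bodyguards can change their mutual distance at will, so your cases $d\ge 3$ and $d\in\{1,2\}$ do not stay separated, and ``stepping onto an adjacent bodyguard's vertex just before being surrounded'' does not obviously prevent the two bodyguards from simply re-aligning at distance~$2$ on the next turn and surrounding you anyway. You acknowledge this is the chief technical difficulty, and as written the argument is not complete.

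The paper's argument avoids all of this with a single observation: the president can ignore one bodyguard entirely. Since $n\ge 6$, there is a vertex at distance at least~$3$ from $b_1$; the president starts there and thereafter mimics $b_1$'s direction of motion around the cycle, so that after every president move his distance to $b_1$ is again at least~$3$. Consequently, after every bodyguard move $b_1$ is at distance at least~$2$ from the president and hence not on a neighbour, so the president is never surrounded regardless of what $b_2$ does. This replaces your entire case analysis with one invariant maintained against a single token.
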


\begin{proof}
    Let $v_0, v_1, \dots, v_{n-1}$ be the vertices of the cycle $C_n$ where $v_i$ is adjacent to $v_{i-1 \!\! \mod n}$ and $v_{i+1 \!\! \mod n}$. Once the president is surrounded by two bodyguards, regardless of whether the president moves clockwise or counterclockwise around the cycle, the bodyguards can follow his movements indefinitely. Thus, to determine the bodyguard number of a cycle it suffices to determine how many bodyguards are needed to surround the president.

    Consider the cycle $C_5$. Place a bodyguard on each of the vertices $v_0$ and $v_2$. Then, regardless of where the president starts, the two bodyguards surround him on their first move. Thus $B(C_5) \leq 2$, and from Lemma \ref{lem: trivial lower bounds on bodyguard num}, $B(C_5) = 2$. Note that for any two cycles $C_k$ and $C_{k-1}$, $C_{k-1}$ is a retract of $C_k$. Therefore, by Lemma \ref{lem: trivial lower bounds on bodyguard num} and Theorem \ref{thm: Bodyguard number of retracts}, $2\leq B(C_3) \leq B(C_4)\leq B(C_5) = 2.$ 

    Now consider the cycle $C_n$ for a fixed $n\geq 6$. First, we  show that two bodyguards are not enough to win the game. Suppose two bodyguards, $b_1$ and $b_2$, are playing against the president. Regardless of where the bodyguards begin the game, the president starts on a vertex that is distance at three from $b_1$. Then the president can always maintain a distance of at least two away $b_1$ by moving in the same direction as $b_1$. Therefore, $b_1$ can never be adjacent to the president and so the president never gets surrounded. Thus $B(C_n) > 2$.
    
    We give a strategy for three bodyguards, $b_1$, $b_2$ and $b_3$, on $C_n$. Note that $c(C_n) = 2$. For the bodyguards to win, they must move onto the two vertices adjacent to the president. The bodyguards use a cop winning strategy on $C_n$ to force one of the vertices adjacent to the president to be occupied by one of the bodyguards, say $b_1$. Afterwards, $b_2$ and $b_3$ use a cop winning strategy again to move $b_2$ onto the other vertex adjacent to the president, after which the president is surrounded. Therefore $B(C_n) =3$. 
\end{proof}

\begin{theorem}\label{thm: bodyguard num of complete k-partite graphs}
    If $n_1, \dots, n_k \in \mathbb{Z}^+$ with $n_1 \leq n_2 \leq \cdots \leq n_k$ then $$B(K_{n_1, \dots, n_k}) = \sum_{i=2}^k n_i.$$
\end{theorem}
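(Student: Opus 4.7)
The lower bound is immediate from Lemma~\ref{lem: trivial lower bounds on bodyguard num}: writing $n = n_1 + \cdots + n_k$ and $V_1, \ldots, V_k$ for the parts, a vertex of the smallest part $V_1$ has degree $n - n_1 = \sum_{i=2}^{k} n_i$, and this is exactly $\Delta(K_{n_1,\ldots,n_k})$. The content of the theorem is therefore the matching upper bound, for which I would exhibit an explicit strategy using exactly $n - n_1$ bodyguards, driven by a single invariant family of configurations $\{S_i\}_{i=1}^k$.

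Define configuration $S_i$ to mean: the president occupies some vertex $p \in V_i$; the $n - n_1$ bodyguards sit on $n - n_1$ distinct vertices; every vertex of $V \setminus V_i$ contains a bodyguard; and the remaining $n_i - n_1$ bodyguards lie on distinct vertices of $V_i \setminus \{p\}$. Since $N(p) = V \setminus V_i$ in a complete multipartite graph when $p \in V_i$, the president is surrounded whenever the bodyguards are in configuration $S_i$ with $p \in V_i$. The setup I would use is to place one bodyguard on each vertex of $V \setminus V_1$. If the president is placed in $V_1$, configuration $S_1$ holds immediately. Otherwise he chooses $p \in V_i$ for some $i \geq 2$, and the bodyguards respond by moving $n_1$ of the $n_i$ bodyguards on $V_i$ --- including the one sharing the vertex $p$ --- to the $n_1$ vertices of $V_1$, one per vertex, which is legal because $V_i$ is completely joined to $V_1$. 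This yields $S_i$.

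The main step is the transition. From $S_i$, suppose the president moves from $p$ to some $u \neq p$. Since parts are independent sets, $u \in V_j$ for some $j \neq i$, and $u$ already carries a bodyguard because $V_j \subseteq V \setminus V_i$ was fully covered in $S_i$. Exactly $n_1$ vertices of $V_i$ lack a bodyguard, and $V_j$ contains $n_j \geq n_1$ bodyguards, one per vertex. I would move the bodyguard at $u$ together with any $n_1 - 1$ other bodyguards from $V_j$ to the $n_1$ uncovered vertices of $V_i$, one per vertex, and leave every other bodyguard in place. A brief bookkeeping check then shows the resulting configuration is $S_j$: the parts $V_\ell$ with $\ell \neq i, j$ remain fully covered, $V_i$ now has all $n_i$ vertices covered, and $V_j$ has exactly $n_j - n_1$ bodyguards on distinct vertices of $V_j \setminus \{u\}$.

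The main point of care is the two boundary cases. When $j = 1$, we have $n_j = n_1$, so every bodyguard on $V_1$ must move, emptying $V_1$ --- which agrees with $S_1$ requiring no bodyguards on $V_1$ since $n_1 - n_1 = 0$. When $i = 1$, no bodyguards were on $V_1$ in $S_1$, so the $n_1$ incoming bodyguards from $V_j$ exactly fill the $n_1$ uncovered vertices of $V_1 = V_i$. The trivial case $k = 1$ reduces to $B(\overline{K_{n_1}}) = 0$ together with the empty-sum convention. After the single first bodyguard move, some $S_i$ holds, and the transition argument shows that from then on the bodyguards can always end their turn in some configuration $S_j$, so the president is surrounded on all but at most one turn, establishing the upper bound and matching the lower bound.
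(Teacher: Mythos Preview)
Your proof is correct and follows essentially the same approach as the paper: cover $V\setminus V_i$ whenever the president sits in $V_i$, and appeal to Lemma~\ref{lem: trivial lower bounds on bodyguard num} for the lower bound. The paper's argument is a two-sentence sketch that simply asserts the bodyguards ``can move to the sets of vertices $X_j$ where $j\in[1,i)\cup(i,k]$''; your version is more careful in that it names an explicit invariant $S_i$ (parking the surplus $n_i-n_1$ bodyguards inside $V_i\setminus\{p\}$ rather than leaving them unspecified on $V\setminus V_i$) and verifies the one-step transition, which makes the feasibility of each move transparent. One tiny omission: you only treat the case where the president actually moves to some $u\neq p$; the pass move (president stays at $p$) should be noted, but it is trivially handled by the bodyguards also standing still in $S_i$.
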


\begin{proof}

    Let $X_1, \dots X_k$ be the partition of $V(K_{n_1, \dots, n_k})$ such that $|X_i| = n_i$ for each $1\leq i\leq k$. The bodyguards start on the vertices in $X_2, \dots, X_k$. Whichever $X_i$ the president resides in, the bodyguards move to the sets of vertices $X_j$ where $j\in [1,i) \cup (i,k]$. This strategy allows the bodyguards to surround the president indefinitely. Since $\Delta(K_{n_1, \dots, n_k}) = \sum_{i=2}^k n_i$, by Lemma \ref{lem: trivial lower bounds on bodyguard num} we have $B(K_{n_1, \dots, n_k}) = \sum_{i=2}^k n_i$. 
\end{proof}

\begin{theorem}\label{thm: bodyguard num of trees}
    If $T$ is a tree on at least three vertices with $\ell$ leaves, then $B(T) = \ell$.
\end{theorem}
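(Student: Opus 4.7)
The plan is to prove $B(T) = \ell$ by establishing matching upper and lower bounds.

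For the upper bound $B(T) \le \ell$, I assign one bodyguard $b_\ell$ to each leaf $\ell$ of $T$ (placed initially at $\ell$) and propose maintaining the invariant that, when the president sits at vertex $v$, the bodyguard $b_\ell$ occupies $f_v(\ell)$, the unique neighbour of $v$ lying on the $v$-to-$\ell$ path in $T$ (with the convention $f_v(\ell)=\ell$ when $v=\ell$). First, this invariant forces surrounding: every neighbour $w$ of $v$ determines a subtree $T_{v,w}$ of $T-v$ that contains at least one leaf $\ell$ of $T$ (either $w$ itself when $T_{v,w}=\{w\}$, or a leaf of $T_{v,w}$ distinct from $w$), and for such an $\ell$ the target $f_v(\ell)$ equals $w$. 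Second, the invariant is preserved by any presidential move: when the president moves from $v$ to $u\in N(v)$, a case analysis on whether the $v$-to-$\ell$ path passes through $u$ shows that $f_u(\ell)$ is either the second vertex on the old $v$-to-$\ell$ path (when the president moved toward $\ell$) or $v$ itself (otherwise), both of which are adjacent to $f_v(\ell)$. Third, from the initial placement the bodyguards can reach the invariant configuration in finitely many rounds: each $b_\ell$ independently pursues its current target $f_v(\ell)$, using the fact that trees have cop number one and the target shifts by at most one edge per round.

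For the lower bound $B(T) \ge \ell$, I would establish the following subtree-localised claim by strong induction on $|V(T_{v,w})|$: in any winning bodyguard strategy, whenever the president sits at a vertex $v$ at a time $t$ in the eternal phase, for every neighbour $w$ of $v$ the number of bodyguards lying inside $V(T_{v,w})$ at time $t$ is at least $|L(T_{v,w})|$, where $L(T_{v,w})$ denotes the set of leaves of $T$ contained in $V(T_{v,w})$. The base case $|V(T_{v,w})|=1$ is the surrounding condition applied to the single leaf $w$. For the inductive step, I consider the hypothetical move of the president from $v$ to $w$ at time $t+1$; the strategy must still surround the president at $w$, and the inductive hypothesis applied at time $t+1$ yields at least $|L(T_{w,W_i})|$ bodyguards inside each $V(T_{w,W_i})$ for every neighbour $W_i$ of $w$ in $T_{v,w}$. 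Since $T_{w,W_i}$ is separated from the remainder of $T$ only by the vertex $w$, each such bodyguard at time $t+1$ was at time $t$ in $V(T_{w,W_i})\cup\{w\}$, and each bodyguard sitting at $w$ at time $t$ can enter at most one $V(T_{w,W_i})$ in a single round. Summing the inequalities over $i$ then gives $|L(T_{v,w})|=\sum_i|L(T_{w,W_i})|\le\sum_i|B(t)\cap V(T_{w,W_i})|+|B(t)\cap\{w\}|=|B(t)\cap V(T_{v,w})|$. With the claim in hand, since $n\ge 3$ the president can guarantee occupying a non-leaf vertex $v$ at some time $t$ during the eternal phase (by stepping from any leaf onto its support), and summing the claim over $w\in N(v)$ yields $|B(t)|\ge\sum_{w\in N(v)}|L(T_{v,w})|=\ell$.

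The main obstacle I expect is the careful accounting in the inductive step of the lower bound: one must observe that a bodyguard at $w$ at time $t$ can be distributed to at most one of the child subtrees $T_{w,W_i}$ by time $t+1$, so the counts $|L(T_{w,W_i})|$ add rather than duplicate. A secondary subtlety is handling the setup phase of the upper bound, which I sidestep by invoking the cop-number-one property of trees independently for each $b_\ell$.
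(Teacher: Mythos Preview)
Your upper bound coincides with the paper's: both assign one bodyguard per leaf $\ell$, send it to the neighbour $f_v(\ell)$ of the president on the president--$\ell$ geodesic, and invoke $c(T)=1$ to reach that moving target in finitely many rounds. Your lower bound is correct but organised differently. The paper exhibits an explicit president strategy against $\ell-1$ bodyguards: starting from a centre vertex he repeatedly steps (without immediately backtracking) into the component of $T\setminus v_P$ containing more leaves than bodyguards, and when he arrives at a vertex $u$ whose forward neighbours are leaves $v_{n_1},\dots,v_{n_k}$, those $k$ leaves cannot all be covered on the next bodyguard turn. You instead prove, by strong induction on $|V(T_{v,w})|$, the structural invariant that in any eternally surrounded configuration each branch $T_{v,w}$ already carries at least $|L(T_{v,w})|$ bodyguards, and then sum over $w\in N(v)$ at a non-leaf $v$. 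The combinatorial core---comparing bodyguard counts to leaf counts in rooted subtrees---is identical; your version is the contrapositive and is arguably cleaner, since it characterises winning configurations directly rather than tracing an escape path. One point to make precise: your inductive step applies the hypothesis at time $t+1$ after a \emph{hypothetical} president move to $w$, so you need the ``eternal phase'' to be closed under every president move. This holds provided you define it as the set of histories from which the fixed winning bodyguard strategy surrounds at every subsequent turn against \emph{all} president continuations; that such a history is eventually (and then permanently) reached in every play follows from a routine K\"onig-type argument, which you should state explicitly.
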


\begin{proof}
    We begin by describing a winning strategy for $\ell$ bodyguards. Let $v_1,\dots, v_\ell$ be the leaves of $T$ and let $v_P$ denote the vertex that is occupied by the president. Let $B_1,\dots, B_\ell$ be the bodyguards and for each $i$, $B_i$ starts on $v_i$. For each $1\leq i\leq \ell$, let $P_i$ denote the unique geodesic from $v_i$ to $v_P$ and $u_i$ denote the unique vertex on $P_i$ that is adjacent to $v_P$. Note that the $u_i$'s  are not necessarily distinct and as the president moves, $P_i$ and $u_i$ may change.

    Since $c(T)=1$, for each $1\leq i\leq \ell$ the bodyguard $B_i$ can move to $u_i$ in finitely many moves. Thus $u_1\dots, u_\ell$ can be occupied indefinitely by the bodyguards after finitely many turns. Next we claim that $N(v_P) = \{u_1,\dots, u_\ell\}$. By definition of $u_i$, $\{u_1,\dots, u_\ell\} \subseteq N(v_P)$. Suppose there exists $x\in N(v_P)$ such that $x\neq u_i$ for any $i$. Note that $x \notin \{v_1,\dots, v_\ell\}$ since otherwise the edge $x v_P$ would be a geodesic from a leaf to $v_P$ and so $x \in \{u_1,\dots, u_\ell\}$ by definition. Let $P_x^i$ be the path from $x$ to $v_i$. If $v_P \notin V(P_x^i)$ for some $1\leq i\leq \ell$, then $P_x^i$ and $P_i$ share a vertex $y$. Thus the path contained in $P_x^i$ from $x$ to $y$, the path contained in $P_i$ from $y$ to $v_P$, and the edge $x v_P$ form a cycle; a contradiction. If for all $1\leq i\leq \ell$, $P_x^i$ contains $v_P$ then the only vertex adjacent to $x$ is $v_P$. So $x$ is a leaf which we have already shown leads to a contradiction.

    Thus, after finitely many turns, every vertex adjacent to the president can be indefinitely occupied by the $\ell$ bodyguards and so $B(T) \leq \ell$. Next we show that $B(T) > \ell - 1$.
    
    Let $c$ be any fixed vertex of $T$ that is not a leaf. Assume that there are $\ell - 1$ bodyguards in play. The president begins on $c$. If the bodyguards cannot surround the president at $c$, then $\ell - 1$ bodyguards are not  enough to win. Suppose instead that the bodyguards surround the president on $c$ after finitely many moves. Since there are $\ell - 1$ bodyguards and $\ell$ leaves in $T$, there exists a component $X$ in $T\backslash v_P$ that contains more leaves than bodyguards. On his next turn, the president moves onto $X$. The president repeats this process without traversing the same edge during two consecutive moves until the president moves to a vertex $u$ adjacent to the leaves $v_{n_1}, \dots, v_{n_k}$. Since the president always moves onto a component of $T\backslash v_P$ that contains fewer bodyguards than vertices in the set $\{v_1, \dots, v_\ell\}$, once the president moves onto $u$ there are at most $k-1$ bodyguards either on $u$ or on one of the $v_{n_i}$'s. Thus on the bodyguards' next move, they cannot occupy all of the leaves $v_{n_1}, \dots, v_{n_k}$ and so the president is not surrounded. Then the president can move back to $c$ and repeat this process indefinitely so that the bodyguards cannot win. So $B(G) > \ell - 1$ and thus we have $B(T) = \ell$. 
\end{proof}

Combining these results with Lemma \ref{lem: trivial lower bounds on bodyguard num}, we obtain a complete characterization of connected graphs with bodyguard numbers one and two. 

\begin{theorem}\label{thm: low bodyguard num characterizations}
    Let $G$ be a connected graph. Then $B(G) = 1$ if and only if $G\cong P_2$. Furthermore, $B(G) = 2$ if and only if either $G \cong P_n$ for some $n\geq 3$ or $G\cong C_m$ where $3\leq m\leq 5$.
\end{theorem}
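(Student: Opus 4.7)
The plan is to use the trivial lower bound $B(G)\ge\Delta(G)$ from Lemma \ref{lem: trivial lower bounds on bodyguard num} to restrict the possible structure of $G$, and then apply the previously computed bodyguard numbers for the surviving candidates (paths via Theorem \ref{thm: bodyguard num of trees} and cycles via Theorem \ref{thm: Bodyguard numbers of cycles}).

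For the characterization of $B(G)=1$, I would first note that $B(G)=1$ forces $\Delta(G)\le 1$ by Lemma \ref{lem: trivial lower bounds on bodyguard num}. A connected graph with maximum degree at most $1$ is either a single vertex or $P_2$. The single vertex has empty open neighbourhood, so it is vacuously surrounded by zero bodyguards, giving $B(P_1)=0$. Hence we must have $G\cong P_2$, and conversely $B(P_2)=1$ since a single bodyguard stationed on the unique vertex not occupied by the president surrounds him permanently.

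For the characterization of $B(G)=2$, the same lemma forces $\Delta(G)\le 2$, so every connected $G$ with $B(G)=2$ must be a path or a cycle (on at least three vertices, since $B(P_1)=0$ and $B(P_2)=1$ are already excluded). For paths $P_n$ with $n\ge 3$, Theorem \ref{thm: bodyguard num of trees} applied to the tree $P_n$ with exactly $\ell=2$ leaves gives $B(P_n)=2$. For cycles, Theorem \ref{thm: Bodyguard numbers of cycles} shows $B(C_m)=2$ precisely when $3\le m\le 5$, while $B(C_m)=3$ for $m\ge 6$. Combining these cases, the connected graphs $G$ with $B(G)=2$ are exactly $P_n$ for $n\ge 3$ and $C_m$ for $3\le m\le 5$.

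There is essentially no obstacle here: the work has been done in the earlier sections, and the only care required is to make sure the small connected graphs with $\Delta(G)\le 2$ (namely $P_1$, $P_2$, the paths $P_n$ with $n\ge 3$, and all cycles) are each accounted for so that nothing is missed and no extraneous graph is included. The proof is therefore short, essentially a bookkeeping argument that assembles Lemma \ref{lem: trivial lower bounds on bodyguard num}, Theorem \ref{thm: Bodyguard numbers of cycles}, and Theorem \ref{thm: bodyguard num of trees}.
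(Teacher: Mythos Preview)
Your proposal is correct and follows essentially the same approach as the paper: use Lemma~\ref{lem: trivial lower bounds on bodyguard num} to bound $\Delta(G)$, enumerate the connected graphs with $\Delta(G)\le 2$, and then invoke Theorem~\ref{thm: bodyguard num of trees} for paths and Theorem~\ref{thm: Bodyguard numbers of cycles} for cycles. The only cosmetic difference is that the paper cites Theorem~\ref{thm: bodyguard num of complete k-partite graphs} for $B(P_2)=1$ (viewing $P_2$ as $K_{1,1}$), whereas you give the one-line direct argument; both are fine.
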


\begin{proof}
    If $B(G) = 1$, then $\Delta(G) = 1$ by Lemma \ref{lem: trivial lower bounds on bodyguard num}. The only connected graph with maximum degree one is $P_2$ and so $G\cong P_2$. From Theorem \ref{thm: bodyguard num of complete k-partite graphs}, $B(P_2) = 1$. 
    
    If $B(G) = 2$, then $\Delta(G) \leq 2$. Since $\Delta(G) = 1$ implies $G \cong P_2$ and since $B(P_2) = 1$, any graph with a bodyguard number of two has $\Delta(G) = 2$. The only connected graphs with maximum degree two are paths and cycles. From Theorem \ref{thm: bodyguard num of trees}, all paths on at least three vertices have a bodyguard number of two and from Theorem \ref{thm: Bodyguard numbers of cycles}, the cycles with a bodyguard number of two are $C_3$, $C_4$ or $C_5$. 
\end{proof}

\section{Cartesian Product}\label{sec: Cartesian prod}

In this section, we present a variety of results involving Cartesian products. The \emph{Cartesian product} of graphs $G$ and $H$, denoted $G\square H$, has vertex set $V(G) \times V(H)$ and edges 
$\{(u,v), (x,y)\}$ where $ux \in E(G)$ and $v=y$, or $u=x$ and $vy\in E(H)$.
The subgraph induced by the vertices $\{(u,v) \mid u\in V(G)\}$ is be denoted $G[v]$. (Similarly for $H[u]$, $u \in V(G)$.) We think of $G[v]$ and $H[u]$ as being ``copies" of $G$ and $H$ respectively within the graph $G\square H$. We say that two copies of $G$, $G[v_1]$ and $G[v_2]$, are adjacent if $v_1 v_2 \in E(H)$. (Similarly for two copies of $H$.)

We first make use of the cop number to obtain the following bound on the bodyguard number of the Cartesian product of two graphs.

\begin{theorem}\label{thm: Cartesian product upper bound}
    If $G$ and $H$ are connected graphs, then $$B(G\square H) \leq B(G) + B(H) + c(G\square H) - 1.$$
\end{theorem}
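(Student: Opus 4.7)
The plan is to combine a cop-winning strategy on $G\square H$ with bodyguard-winning strategies on each factor. I would split the $B(G)+B(H)+c(G\square H)-1$ bodyguards into three teams: a ``cop team'' $\mathcal{C}$ of size $c(G\square H)$, a ``$G$-team'' $\mathcal{T}_G$ of size $B(G)$, and an ``$H$-team'' $\mathcal{T}_H$ of size $B(H)$, with a single bodyguard shared between $\mathcal{C}$ and $\mathcal{T}_H$ (accounting for the $-1$ in the bound). In Phase~1, $\mathcal{C}$ executes a cop-winning strategy on $G\square H$; after finitely many turns some cop $A\in\mathcal{C}$ occupies the president's vertex, and by continuing to mirror the president's moves $A$ remains on the president indefinitely.

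In Phase~2, the factor teams play shadow bodyguard games. Since the president's $G$-projection moves each turn as a legal player on $G$ (either staying or sliding along a single $G$-edge), the $G$-projections of the members of $\mathcal{T}_G$ can act as virtual bodyguards in a bodyguard game on $G$ against the president's $G$-projection, a game winnable by $B(G)$ bodyguards from any initial position. The analogous game is played by $\mathcal{T}_H$ on $H$. To lift these shadow wins to a surround in $G\square H$ itself, the bodyguards maintain the invariant that each member of $\mathcal{T}_G$ shares the president's $H$-coordinate and each member of $\mathcal{T}_H$ shares his $G$-coordinate. This is preserved turn-by-turn by answering each presidential $G$-move with a $G$-strategy move in $\mathcal{T}_G$ together with a $G$-translation in $\mathcal{T}_H$, and symmetrically for $H$-moves; each bodyguard spends its one permitted move per turn either on a factor-strategy step or on a cross-coordinate translation. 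Once both shadow games are won and the invariant holds, $\mathcal{T}_G$ fills $N_G(u)\times\{v\}$ and $\mathcal{T}_H$ fills $\{u\}\times N_H(v)$, surrounding the president at $(u,v)$.

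The main obstacle is bootstrapping the cross-coordinate invariant, since the bodyguards are placed before the president and cannot pre-align. This is precisely what the capture in Phase~1 resolves: with $A$ anchored to the president, the other bodyguards have a persistent tracking beacon and all the time they need to drift into alignment, using those turns on which the president moves in the factor opposite to their team as free moves for cross-coordinate adjustment without disturbing the shadow games. The delicate verification is that this drifting completes in finite time against adversarial play; this rests on the continued presence of the cop team $\mathcal{C}$, which maintains enough global control to prevent the president from dodging alignment indefinitely in either factor. Finally, the shared bodyguard in $\mathcal{C}\cap\mathcal{T}_H$ transitions from her cop role into her surround slot after the other bodyguards reach alignment, completing the formation and yielding the claimed bound.
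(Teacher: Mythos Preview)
Your overall architecture---shadow bodyguard games in each factor, lifted to $G\square H$ via the invariant that $\mathcal{T}_G$ sits in the fiber $G.\{v_p\}$ and $\mathcal{T}_H$ in $\{u_p\}.H$---matches the paper's, but the step you yourself flag as ``delicate'' is a genuine gap, not a detail. A single capture by $A$ gives you a beacon, not a mechanism. Consider a bodyguard $b\in\mathcal{T}_G$ trying to reach $H$-coordinate $v_p$. On turns when the president moves in $H$ you call this a ``free move'' for $b$, but $v_p$ has just moved as well: $b$'s $H$-projection is then playing a one-cop pursuit on $H$ against the president's $H$-projection, which need not succeed when $c(H)>1$. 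On turns when the president moves in $G$, $b$ must spend her move on the $G$-shadow response, so her $H$-coordinate is frozen. Neither $A$ sitting on the president nor the idle $c(G\square H)-1$ remaining cops changes this arithmetic; ``enough global control'' is an assertion, not an argument, and the shared bodyguard you release at the end faces the same alignment problem all over again.

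The paper closes exactly this gap by using the cop team \emph{iteratively} rather than once. It fixes virtual bodyguard strategies on $G$ and on $H$ from the outset and observes that each of the $B(G)+B(H)$ resulting target vertices---a virtual $G$-position paired with $v_p$, or $u_p$ paired with a virtual $H$-position---moves along at most one edge of $G\square H$ per round, and hence behaves like a legal evader on $G\square H$. The $c(G\square H)-1$ helper bodyguards, together with the one bodyguard destined for that slot, then run a cop-winning strategy against that moving target; once it is caught, the slot bodyguard can track her target forever while the helpers proceed to the next slot. After $B(G)+B(H)$ such captures the whole formation is in place. This repeated capture of phantom targets is precisely the missing ingredient in your drift argument, and it is also what makes the $-1$ natural: the cop team is perpetually one short, the missing cop being whichever slot bodyguard is currently being installed.
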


\begin{proof}
    Let $(u_p, v_p)$ denote the president's vertex on $G\square H$. If the bodyguards are able to set up a bodyguard winning strategy on $G[v_p]$ with $B(G)$ bodyguards, then the president will be indefinitely  surrounded on $G[v_p]$. By ``set up a bodyguard winning strategy on $G[v_p]$" we mean that the bodyguards position themselves in finitely many moves such that the bodyguards can occupy every vertex in $N_{G[v_p]}(u_p)$ by the end of every following bodyguard turn. Similarly, if the bodyguards can set up a bodyguard winning strategy on $H[u_p]$ with $B(H)$ bodyguards, then the president will be indefinitely surrounded on $H[u_p]$. 
    
    Fix a bodyguard winning strategy on $G[v_p]$ and a bodyguard winning strategy on $H[u_p]$. If, by using a set of $B(G) + B(H)$ bodyguards, the bodyguards can set up both of these winning strategies simultaneously, then they can indefinitely surround the president since every move the president makes either puts him in a new copy of $G$ or a new copy of $H$ and he cannot move to both a new copy of $G$ and $H$ in one move. 
    
    The vertices that the $B(G) + B(H)$ bodyguards need to occupy to successfully implement these strategies change relative to the president. Informally, each of the vertices that the $B(G) + B(H)$ bodyguards need to occupy move the same way a robber does in a game of Cops and Robber. By using $c(G\square H) - 1$ bodyguards plus $1$ bodyguard from a set of $B(G) + B(H)$ bodyguards, the bodyguards can use a cop winning strategy to move a bodyguard, call it $b$, onto a desired vertex relative to the president's position after a finite number of moves. That is, the set of $c(G\square H)$ bodyguards ``captures" the desired vertex with $b$ being the bodyguard that moves onto the desired vertex. If the president moves in such a way that the desired vertex changes position, $b$ moved onto the new position of the desired vertex. When a bodyguard is set up in this way, we say that the vertex is \emph{captured} by the bodyguard. Once one vertex is captured, this leaves $c(G\square H) - 1$ bodyguards to join with another one of the $B(G) + B(H)$ bodyguards to capture another vertex needed for the winning strategy. 
    
    Therefore in finitely many turns, the $B(G) + B(H)$ bodyguards can position themselves to indefinitely surround the president on both $G[v_p]$ and $H[u_p]$ simultaneously. By the discussion at the beginning of the proof, this means that the $B(G) + B(H)$ bodyguards can indefinitely surround the president on $G\square H$. 
\end{proof}

Next, we investigate $2$-dimensional Cartesian grids.

\begin{lemma}\label{lem: bguard num 2xn Cartesian grid}
    If $n\geq 2$ then 
    \[
    B(P_2 \square P_n) = \begin{cases}
	2                  	&\text{if $n = 2$,}\\
	3					&\text{if $n\geq 3$.}
    \end{cases}
    \]
\end{lemma}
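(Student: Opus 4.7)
The $n = 2$ case is immediate: $P_2 \square P_2 \cong C_4$, so Theorem~\ref{thm: Bodyguard numbers of cycles} gives $B(P_2 \square P_2) = 2$. For $n \geq 3$, every interior vertex of the ladder has degree $3$, so Lemma~\ref{lem: trivial lower bounds on bodyguard num} yields $B(P_2 \square P_n) \geq 3$, and it remains to exhibit a winning strategy for three bodyguards.

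My plan is to maintain an invariant. Label vertices as $(i,j)$ with $i\in\{1,2\}$ and $1\le j\le n$, and for each possible president position $(a,b)$ define a target set $T(a,b)$ of three vertices: when $1<b<n$, take $T(a,b) = N(a,b)$, which has size three; at each corner, take $T$ to consist of the two neighbours of the corner together with the unique non-corner vertex adjacent to both of them. In particular $T(1,1) = \{(1,2),(2,1),(2,2)\}$, with analogous choices at the other three corners. Whenever the three bodyguards occupy $T(a,b)$ with the president at $(a,b)$, $N(a,b)\subseteq T(a,b)$, so the president is surrounded.

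The core step is to verify that, starting from bodyguards on $T(a,b)$ and president at $(a,b)$, the bodyguards can answer any president move to $(a',b')$ by repositioning into $T(a',b')$ in a single turn. This reduces to a short case analysis partitioned by transition type (interior-to-interior, interior-to-corner, corner-to-interior, corner-to-corner, or staying). As an illustration, from $T(1,1)=\{(1,2),(2,1),(2,2)\}$ with president at $(1,1)$, if the president moves to $(1,2)$ then the displaced bodyguard at $(1,2)$ advances to $(1,3)$, the bodyguard at $(2,1)$ moves to $(1,1)$, and the bodyguard at $(2,2)$ stays, giving $T(1,2)=\{(1,1),(1,3),(2,2)\}$; every other transition admits an analogous explicit reassignment.

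To reach an invariant configuration from an arbitrary initial placement, I would have the three bodyguards sweep the ladder from one end, maintaining a ``barrier'' of two bodyguards in a common column $k$ together with a third bodyguard in column $k+1$, and advancing $k$ by one each turn. The president is confined to one side of this barrier, so after finitely many turns the bodyguards drive him into a corner and can rearrange into some $T(a,b)$. The main obstacle is the transition analysis near the corners, where the ``parked'' third bodyguard breaks the symmetry enjoyed at interior vertices; one must pick the corner target sets so that every outgoing president move admits a valid single-turn response, and the choice $T(1,1)=\{(1,2),(2,1),(2,2)\}$ (together with its symmetric counterparts) is designed precisely to meet this requirement.
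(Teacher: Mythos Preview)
Your target-set invariant is correct and is a genuinely different device from what the paper uses. The paper sidesteps boundary issues by enlarging the ladder to $P_2\square P_{n+2}$, running a sweep-plus-case-analysis capture there, and then pulling the strategy back through a retraction to $P_2\square P_n$; you instead handle the corners head-on by parking the spare bodyguard at the diagonal vertex, and one can check that every president transition (interior$\to$interior, interior$\leftrightarrow$corner, corner$\to$corner, pass) admits a one-step reassignment into the new $T(a',b')$. So once the invariant is reached, your maintenance argument is complete, and it is arguably cleaner than the extension--retraction trick.

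The gap is in the capture phase. The claim ``advancing $k$ by one each turn \dots\ the president is confined to one side of this barrier, so after finitely many turns the bodyguards drive him into a corner'' is not correct as written. The barrier only blocks crossing \emph{while it stays put}; if the president sits in column $k+1$ and steps left just after you advance, he lands in the vacated column $k$ behind the new barrier, and your sweep fails. Moreover, even when the president is caught at column $k+1$, you cannot always enter $T(a,k+1)$ in one move from $\{(1,k),(2,k),(1,k+1)\}$: if the president is at $(2,k+1)$ then $(2,k+2)\in T(2,k+1)$ is at distance $3$ from $(1,k)$. What is actually needed---and what the paper supplies via its Cases (i)--(ii)---is a rule that stops advancing once the president is at column $k+1$ and a short case analysis on the president's row and next move showing the bodyguards can reach some $T(a,b)$ within one or two turns. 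That analysis is routine, but it is not the ``drive him into a corner'' picture you describe; the capture typically happens mid-ladder.
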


\begin{proof}
    If $n=2$, then $P_2 \square P_2$ is isomorphic to $C_4$. From Theorem \ref{thm: Bodyguard numbers of cycles}, $B(C_4) = 2$. 
    
    Suppose $n\geq 3$. Since $\Delta(P_2 \square P_n) = 3$, $B(P_2 \square P_n) \geq 3$. To show that three bodyguards  win on $P_2 \square P_n$, assume that the bodyguards play on $P_2 \square P_{n+2}$ with vertices labelled $(x,y)$ where $1\leq x\leq 2$ and $0\leq y\leq n+1$ while the president plays on the subgraph induced by the vertices $\{(u,v) \mid 1\leq u\leq 2, 1\leq v\leq n\}$. An example of this labelling system can be seen in Figure \ref{fig: P3 x Pn winning positions}. Note that two vertices $(x,y)$ and $(x^\prime, y^\prime)$ are adjacent if either $x=x^\prime$ and $|y-y^\prime| = 1$ or $|x - x^\prime| = 1$ and $y= y^\prime$. We give a strategy for the bodyguards to win on $P_2 \square P_{n+2}$ while the president is restricted to the $P_2 \square P_n$ subgraph.
    
    Three bodyguards start on the vertices $(1,0)$, $(2,0)$ and $(1,1)$. All three bodyguards begin the game by increasing their second coordinates every round. Let $(x_p, y_p)$ denote the vertex occupied by the president at the time when the bodyguards have moved to the vertices $(1, y_p - 1)$, $(1, y_p)$ and $(2, y_p - 1)$. From here, the bodyguards' strategy changes depending on how the president moves from this position. If the president moves by increasing his second coordinate at any point, the bodyguards can respond by moving towards the president without changing their strategy. Since the president can only increase $y_p$ finitely many times, the president will eventually only have three options: decrease his second coordinate, change his first coordinate, or stay at the vertex he is currently on. There are two cases for the president's position, either he is at $(1, y_p)$ or he is at $(2, y_p)$.

    \textbf{Case (i): The president is at $(1,y_p)$.} If the president moves from $(1,y_p)$ to $(1,y_p + 1)$, then the bodyguards advance their current second coordinate by one. Eventually, the president will be unable to repeat this move. If the president stays at $(1, y_p)$, the bodyguards move to $(1, y_p - 1)$, $(1, y_p + 1)$ and $(2, y_p)$ to surround the president. If the president moves to $(1, y_p - 1)$, the bodyguards move to $(1, y_p - 2)$, $(1, y_p)$ and $(2, y_p - 1)$ to surround the president. If the president moves to $(2, y_p)$, the bodyguards move to $(2, y_p - 1)$, $(1, y_p)$ and $(2, y_p)$. From here the president can either stay at $(2, y_p)$, move to $(1, y_p)$ or move to $(2, y_p - 1)$. If the president stays at $(2, y_p)$, the bodyguards move to $(2, y_p - 1)$, $(1, y_p)$ and $(2, y_p + 1)$ to surround the president. If the president moves to $(1, y_p)$, the bodyguards move to $(1, y_p - 1)$, $(2, y_p)$ and $(1, y_p + 1)$ to surround the president. If the president moves to $(2, y_p - 1)$, the bodyguards move to $(2, y_p - 2)$, $(1, y_p - 1)$ and $(2, y_p)$ to surround the president.

    \textbf{Case (ii): The president is at $(2,y_p)$.} If the president moves from $(2,y_p)$ to $(2,y_p + 1)$, then the bodyguards advance their current second coordinate by one. Eventually, the president will be unable to repeat this move. The president is at $(2, y_p)$. If the president stays on $(2, y_p)$ then the bodyguards move to $(1, y_p - 1)$, $(2, y_p)$ and $(2, y_p - 1)$. By the symmetry of the graph, the bodyguards surround the president from this position by using a strategy similar to the one from Case (i). If the president moves to $(1, y_p)$, then the bodyguards  follow the strategy from Case (i). If the president moves to $(2, y_p - 1)$, the bodyguards move to $(2, y_p - 2)$, $(1, y_p - 1)$ and $(2, y_p)$ to surround the president.

    Therefore, in the case where the bodyguards play on $P_2 \square P_{n+2}$ while the president is restricted to the subgraph $P_2 \square P_n$,  regardless of how the president moves three bodyguards are enough to surround him, and subsequently to indefinitely surround him. 

    To obtain a winning strategy for when the bodyguards and the president are on $P_2 \square P_n$, we map the bodyguards movements in the above winning strategy for $P_2 \square P_{n+2}$ via the retraction map from $P_2 \square P_{n+2}$ to $P_2 \square P_n$.
\end{proof}

\begin{lemma}\label{lem: P3 x Pn}
    If $n\geq 2$ then
    \[
    B(P_3 \square P_n) = \begin{cases}
	3                  	&\text{if $n = 2$,}\\
	4					&\text{if $n\geq 3$.}
    \end{cases}
    \]
\end{lemma}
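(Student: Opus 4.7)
The plan is to establish both bounds. For the lower bound, apply Lemma~\ref{lem: trivial lower bounds on bodyguard num}: the vertex $(2,1)$ of $P_3 \square P_2$ has three neighbours, and for $n \geq 3$ any interior middle vertex $(2, j)$ with $2 \leq j \leq n-1$ has four, so $\Delta(P_3 \square P_2) = 3$ and $\Delta(P_3 \square P_n) = 4$ for $n \geq 3$. This gives $B(P_3 \square P_2) \geq 3$ and $B(P_3 \square P_n) \geq 4$, matching the two claimed values.

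For the case $n = 2$, I would give an explicit three-bodyguard strategy on the ladder $L_3 = P_3 \square P_2$. Define an invariant configuration for each vertex the president can occupy: when the president sits on one of the degree-three vertices $(2, j)$, the three bodyguards cover its neighbourhood; when he sits on a degree-two corner, the two neighbours are occupied together with the ``spare'' bodyguard on the unique vertex at distance two in the opposite column of the ladder. A short finite case check over all president moves shows that from each invariant position the bodyguards can restore an invariant configuration for the president's new vertex in a single move. The setup phase is handled by placing the bodyguards at $(1,1), (3,1), (2,2)$; a direct verification over the six possible president placements shows the bodyguards can pre-position into an invariant configuration in one move.

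For $n \geq 3$, I would adapt the enlarged-board trick of Lemma~\ref{lem: bguard num 2xn Cartesian grid}. Have the four bodyguards play on $P_3 \square P_{n+2}$, columns indexed $0, 1, \ldots, n+1$, with the president restricted to columns $1$ through $n$. The invariant is a ``plus'' formation keyed to the president's position $(r, j_p)$: if $r = 2$ the bodyguards occupy $(1, j_p),\, (3, j_p),\, (2, j_p - 1),\, (2, j_p + 1)$, and if $r \in \{1, 3\}$ they occupy $(r, j_p - 1),\, (r, j_p + 1),\, (2, j_p),\, (4 - r, j_p)$. Each configuration is a full surround in the extended graph, and the phantom columns $0$ and $n+1$ provide parking when the president is in column $1$ or $n$. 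The key verification is that each of the president's possible moves (stay, translate along the current row, or switch rows) admits a coordinated four-bodyguard response restoring the invariant; every transition reduces to a one-step shift or rotation of the plus along explicit edges. For the setup, start the bodyguards at $(1,1),(3,1),(2,0),(2,2)$ (the invariant for president at $(2,1)$) and have them march as a block, tracking the president's row and column, until the invariant is achieved; since $c(P_3 \square P_n)=2$ and the president's column is bounded, this terminates in finitely many moves. Finally, compose the extended-game strategy with the folding retraction $r : P_3 \square P_{n+2} \to P_3 \square P_n$ identifying columns $0$ and $n+1$ with columns $1$ and $n$ respectively; by the argument of Theorem~\ref{thm: Bodyguard number of retracts}, the image of a surround is a surround, so four bodyguards suffice on $P_3 \square P_n$.

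The main obstacle is the transition analysis for $n \geq 3$: one must check that the plus invariant is preserved under every president move, and in particular that the ``row-switch'' transitions, in which the vertical arm $\{(1, j_p),(3, j_p)\}$ must be reassembled as a horizontal arm $\{(r, j_p \pm 1)\}$ or vice versa, really do work out in a single coordinated bodyguard move. Each individual case is a one-line verification, but exhibiting the canonical ``dictionary'' of responses (translate the plus, or swap arms) and checking it against every row/move pair is where the bulk of the writing lives; the setup phase then follows from the same pursuit bound used in Lemma~\ref{lem: bguard num 2xn Cartesian grid}.
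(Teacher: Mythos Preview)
Your proposal is correct and follows essentially the same architecture as the paper: lower bound via $\Delta$, an extended board $P_3 \square P_{n+k}$ on which the bodyguards march a ``plus'' formation toward the president, a case analysis to show the formation can be re-established after every president move, and finally the retraction of Theorem~\ref{thm: Bodyguard number of retracts} to fold the strategy back onto $P_3 \square P_n$.

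There are two small differences worth noting. First, for $n=2$ the paper simply invokes Lemma~\ref{lem: bguard num 2xn Cartesian grid}, since $P_3\square P_2\cong P_2\square P_3$; your explicit ladder strategy is fine but unnecessary. Second, for the edge-row invariant when the president is at $(r,j_p)$ with $r\in\{1,3\}$, you park the spare bodyguard at $(4-r,j_p)$, whereas the paper (position~$*$) parks it on the president's own vertex $(r,j_p)$. Both choices make all the one-step transitions go through; yours is arguably cleaner because no bodyguard ever coincides with the president, and it lets you get away with extending by only two columns rather than the paper's three. The one place your write-up is thinner than the paper's is the setup phase: ``march as a block, tracking the president's row and column'' needs the same kind of case analysis the paper gives (what happens when the plus first reaches the president's column but he is in row $1$ or $3$), so be prepared to spell that out.
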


\begin{proof}
    If $n=2$, $B(P_3 \square P_n) = 3$ by Lemma \ref{lem: bguard num 2xn Cartesian grid}. Let $n\geq 3$. By Lemma \ref{lem: trivial lower bounds on bodyguard num}, $B(P_3 \square P_n) \geq \Delta(P_3 \square P_n) \geq 4$. To show that four bodyguards can win on $P_3 \square P_n$, we use a similar argument as in the proof of Lemma \ref{lem: bguard num 2xn Cartesian grid}. Assume the bodyguards are playing on $P_3 \square P_{n+3}$ with vertices labelled $(u,v)$ where $1\leq u\leq 3$ and $0\leq v\leq n+2$ while the president plays on the subgraph induced by the vertices $\{(u,v) \mid 1\leq u\leq 3, 2\leq v\leq n+1\}$. The bodyguards start the game on the vertices $(1,1)$, $(2,0)$, $(2,2)$ and $(3,1)$. The bodyguards' strategy begins with increasing their second coordinates every round. Let $(x_p, y_p)$ denote the vertex occupied by the president at the time when the bodyguards have moved to the vertices $(1, y_p-1)$, $(2,y_p-2)$, $(2,y_p)$ and $(3,y_p-1)$. The bodyguards' strategy changes from this position. We give two positions for the bodyguards and the president and show that once in one of the two positions, the bodyguards can indefinitely surround the president. Following this, we show that in finitely many turns the bodyguards can force the game into one of these two positions.

    First, we describe two winning positions for the bodyguards. Suppose the president is on $(1,y)$ for some $1\leq y\leq n$. We claim that if there are bodyguards occupying the vertices $(1,y-1)$, $(1,y)$, $(2,y)$ and $(1, y+1)$ as labelled * in Figure \ref{fig: P3 x Pn winning positions}, then the president loses. In this position the president can either move to $(1,y-1)$, $(1,y+1)$ or $(2,y)$. If the president moves to $(1,y-1)$ the bodyguards move to $(1,y-2)$, $(1,y-1)$, $(2,y-1)$ and $(1, y)$ to surround the president and force the game into *. If the president moves to $(1, y+1)$ the bodyguards move to $(1,y)$, $(1,y+1)$, $(2,y+1)$ and $(1,y+2)$ to force the game into *. If the president moves to $(2,y)$ the bodyguards move to $(2,y-1)$, $(1,y)$, $(3,y)$ and $(2, y+1)$ which is ** in Figure \ref{fig: P3 x Pn winning positions}. By symmetry, the same holds if the president is on $(3, y)$ and the bodyguards occupy $(3,y-1)$, $(3,y)$, $(2,y)$ and $(3, y+1)$.
    
    Now we claim that if the president is on $(2,y)$ and the bodyguards occupy $(1,y)$, $(2, y-1)$, $(3,y)$, and $(2, y+1)$ as labelled ** in Figure \ref{fig: P3 x Pn winning positions}, then the president loses. If the president moves to $(2,y-1)$, then each of the bodyguards decreases their second coordinate to achieve position ** again. If the president moves to $(2, y+1)$, then each of the bodyguards increases their second coordinate to achieve position ** again. If the president changes his first coordinate, then the bodyguards, if possible, change their first coordinate or, for the single bodyguard that cannot move in the same direction, do not move. This way, the bodyguards force * to occur, and so the president is surrounded. 
    
    Therefore, regardless of how the president moves, four bodyguards can continue to surround him indefinitely when the bodyguards are in positions * or **. Therefore, it suffices to show that they can force the president into either position * or **.

    \begin{figure}[htb]
        \centering
        \begin{tikzpicture}[scale=0.75]
            \tikzstyle{vertex}=[circle, draw=black, minimum size=20pt,inner sep=0pt]
            \tikzstyle{bodyguard}=[circle, very thick, fill=blue!5, draw=blue, minimum size=20pt,inner sep=0pt]
            \tikzstyle{president}=[circle, very thick, fill=red!5, draw=red, minimum size=20pt,inner sep=0pt]\
            \tikzstyle{bandp}=[circle, very thick, fill=violet!5, draw=violet, minimum size=20pt,inner sep=0pt]

            \foreach \x in {0,2,4}{
                \foreach \y in {0,2,4} {
                \node[vertex] at (\x,\y) (\x \y0) {};
                \node[vertex, xshift=7cm] at (\x,\y) (\x \y1) {};
                }
                \draw[thick] (\x 00)--(\x 20)--(\x 40);
                \draw[thick] (\x 01)--(\x 21)--(\x 41);
            }
            \foreach \y in {0,2,4}{
                \draw[thick] (0\y 0)--(2\y 0)--(4\y 0);
                \draw[thick] (0\y 1)--(2\y 1)--(4\y 1);
            }

            \node[xshift=-1.25cm, yshift=-0.5cm] at (000) {$(1,y-1)$};
            \node[xshift=-1cm, yshift=-0.5cm] at (020) {$(1,y)$};
            \node[xshift=-1.25cm, yshift=-0.5cm] at (040) {$(1,y+1)$};
            \node[xshift=0cm, yshift=-0.75cm] at (200) {$(2,y-1)$};
            \node[xshift=1cm, yshift=-0.75cm] at (400) {$(3,y-1)$};
            
            \node[bodyguard] at (000) {B};
            \node[bodyguard] at (040) {B};
            \node[bandp] at (020) {BP};
            \node[bodyguard] at (220) {B};
            \node[bodyguard] at (241) {B};
            \node[bodyguard] at (021) {B};
            \node[bodyguard] at (421) {B};
            \node[bodyguard] at (201) {B};
            \node[president] at (221) {P};

            \node at (0,5) (b1) {};
            \node at (2,5) (b2) {};
            \node at (4,5) (b3) {};
            \node at (0,-1) (b4) {};
            \node at (2,-1) (b5) {};
            \node at (4,-1) (b6) {};
            \node[xshift=7cm] at (0,5) (c1) {};
            \node[xshift=7cm] at (2,5) (c2) {};
            \node[xshift=7cm] at (4,5) (c3) {};
            \node[xshift=7cm] at (0,-1) (c4) {};
            \node[xshift=7cm] at (2,-1) (c5) {};
            \node[xshift=7cm] at (4,-1) (c6) {};
            \draw[thick] (b1)--(040);
            \draw[thick] (b2)--(240);
            \draw[thick] (b3)--(440);
            \draw[thick] (b4)--(000);
            \draw[thick] (b5)--(200);
            \draw[thick] (b6)--(400);
            \draw[thick] (c1)--(041);
            \draw[thick] (c2)--(241);
            \draw[thick] (c3)--(441);
            \draw[thick] (c4)--(001);
            \draw[thick] (c5)--(201);
            \draw[thick] (c6)--(401);

            \foreach \w in {0,2,4}{
                \foreach \z in {5.5,5.75,6}{
                    \node at (\w,\z) {$\bullet$};
                    \node[xshift=7cm] at (\w,\z) {$\bullet$};
                }
                \foreach \z in {-1.5, -1.75, -2}{
                    \node at (\w,\z) {$\bullet$};
                    \node[xshift=7cm] at (\w,\z) {$\bullet$};
                }
            }

            \node at (2, -3) {*};
            \node[xshift=7cm] at (2, -3) {**};
        \end{tikzpicture}
        \caption{Two winning positions for four bodyguards on $P_3 \square P_n$.}
        \label{fig: P3 x Pn winning positions}
    \end{figure}

 Suppose $x_p = 1$. If the president moves to $(1, y_p - 1)$ or $(2, y_p)$ then the bodyguards can force position * or ** in Figure \ref{fig: P3 x Pn winning positions} respectively. Suppose the president stays at $(1, y_p)$ for his next turn. Then the bodyguards position themselves to have one bodyguard at $(1, y_p - 1)$ and $(2, y_p)$ and have two bodyguards at $(2, y_p-1)$. If the president moves to $(1, y_p-1)$, the bodyguards move into position *. If the president moves to $(2, y_p)$ then the bodyguards move to $(1, y_p)$, $(2, y_p -1)$, $(2, y_p)$ and $(3, y_p)$. From here, either the president stays at $(2,y_p)$ and the bodyguards force *, the president moves to $(2, y_p - 1)$ and the bodyguards force **, or the president moves to either $(1, y_p)$ or $(3,y_p)$ and the bodyguards force *. Suppose the president again stays at $(1, y_p)$. The bodyguards then move to $(1, y_p - 1)$, $(1, y_p)$, $(2, y_p - 1)$ and $(2, y_p)$. If the president either does not move or moves to $(1, y_p - 1)$, then the bodyguards move to *. If instead the president moves to $(2, y_p)$ the bodyguards move to $(1, y_p)$, $(2, y_p -1)$, $(2, y_p)$ and $(3, y_p)$, which is a position that was covered by a previous case. If at any point the president moves to $(x, y_p + 1)$ then the bodyguards respond by increasing their second coordinates. Since the president can only increase his second coordinate finitely many times, he will eventually be forced to move in some other direction, allowing the bodyguards to continue the above strategy.

    Suppose instead $x_p =2$ or $x_p = 3$. If $x_p = 3$ then a similar argument to above holds by the symmetry of the graph. If $x_p = 2$ then the bodyguards move into position **. 

    To obtain a winning strategy for when the bodyguards and the president are on $P_3 \square P_n$, we map the bodyguards movements in the above winning strategy for $P_3 \square P_{n+3}$ via the retraction map from $P_3 \square P_{n+3}$ to $P_3 \square P_n$.
\end{proof}

\begin{lemma}\label{lem: P4 x P4}
    $B(P_4 \square P_4) = 5$.
\end{lemma}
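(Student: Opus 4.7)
My plan has two parts, corresponding to the upper and lower bounds.

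For the upper bound $B(P_4\square P_4)\le 5$, I would apply Theorem~\ref{thm: Cartesian product upper bound}. Since $P_4$ is a tree with exactly two leaves, Theorem~\ref{thm: bodyguard num of trees} gives $B(P_4)=2$, and it is well known that $c(P_n\square P_m)=2$ whenever $n,m\ge 2$, so $c(P_4\square P_4)=2$. Substituting into Theorem~\ref{thm: Cartesian product upper bound} yields
\[
B(P_4\square P_4)\;\le\; B(P_4)+B(P_4)+c(P_4\square P_4)-1\;=\;2+2+2-1\;=\;5.
\]

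For the lower bound, Lemma~\ref{lem: trivial lower bounds on bodyguard num} only gives $B(P_4\square P_4)\ge \Delta(P_4\square P_4)=4$, so the harder task is to rule out four bodyguards. My plan is to exhibit an evasion strategy for the president. The key rigidity I would exploit is this: whenever four bodyguards surround the president at one of the four interior degree-$4$ vertices, they must occupy exactly his open neighbourhood, so there is no spare bodyguard. A short check shows that the only way to preserve the surround as the president cycles through the central $2\times 2$ block is an essentially unique rotation of the four bodyguards around that block. The president's strategy is to first settle at an interior vertex so that any winning bodyguard strategy must pin the bodyguards into this rigid rotation, and then at a carefully chosen moment break out toward a degree-$3$ or degree-$2$ boundary vertex whose open neighbourhood demands simultaneous coverage by two bodyguards that are two steps away from their targets. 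A symmetry-reduced case analysis tracking each bodyguard's position across the critical two moves would then show that no one-turn response can re-surround him, so the surround breaks infinitely often, contradicting the eternal surround requirement.

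The principal obstacle is this lower-bound case analysis. Each single transition between surround configurations is handleable by four bodyguards, so the obstruction cannot be detected move-by-move; the argument must instead span multiple consecutive president moves and exploit how little slack four bodyguards have once they are locked into the interior rotation. This is what distinguishes $P_4\square P_4$ from the $P_2\square P_n$ and $P_3\square P_n$ grids of Lemmas~\ref{lem: bguard num 2xn Cartesian grid} and~\ref{lem: P3 x Pn}, where the degree bound $\Delta$ was already tight.
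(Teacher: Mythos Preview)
Your upper bound via Theorem~\ref{thm: Cartesian product upper bound} is exactly what the paper does.

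Your lower-bound plan, however, is aimed at the wrong target. You propose to let four bodyguards achieve the surround and then have the president \emph{break} it by heading to the boundary. But in $P_4\square P_4$, once four bodyguards occupy $N(p)$ they can maintain the surround forever: assign one bodyguard to each cardinal direction $d$ and have her sit at $p+d$ (or at $p$ itself whenever $p+d$ falls off the grid). When the president moves from $p$ to $q=p+e$, each bodyguard's target shifts by $e$ as well, and one checks in every case that the required step has length at most $1$. In particular, every single transition you would try---including moves onto degree-$3$ or degree-$2$ boundary vertices---can be answered, so no multi-move ``escape sequence'' exists. Your own observation that each one-step transition is handleable is precisely why the breaking-out idea cannot succeed.

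The paper instead shows that four bodyguards can never achieve the surround in the first place. The president commits to the central block $S=\{(2,2),(2,3),(3,2),(3,3)\}$ throughout, and a case analysis on the bodyguards' positions (how many lie in $S$, how many are adjacent to the president, whether two share a vertex, etc.) shows that the president can always choose a move within $S$ after which at least one of his four new neighbours is at distance $\geq 2$ from every bodyguard. Thus the invariant ``not surrounded'' is preserved turn by turn. You should redirect your argument from ``escape after being surrounded'' to ``never be surrounded,'' keeping the president inside $S$; the case analysis is then a genuine one-move analysis rather than a multi-move one.
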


\begin{proof}
    By Theorem \ref{thm: Cartesian product upper bound}, $B(P_4 \square P_4) \leq 5$. Assume that four bodyguards can surround the president. We label the vertices $(x,y)$ where $1\leq x, y,\leq 4$ and $(x_1, y_1)$ is adjacent to $(x_2, y_2)$ if either $x_1 = x_2$ and $|y_1 - y_2| = 1$ or $|x_1 - x_2| = 1$ and $y_1 = y_2$. The president will stay on the vertices $(2,2)$, $(2,3)$, $(3,2)$, $(3,3)$. If the bodyguards win this game, they must surround the president in finitely many turns. Consider the last move the president makes from $(u_p, v_p)$ to $(u^\prime_p, v^\prime_p)$ before he is surrounded. The four bodyguards need to occupy the four vertices adjacent to the president after he makes his final move. Therefore, if any bodyguard is distance three away from $(u_p, v_p)$, then the president can avoid being surrounded by staying on $(u_p, v_p)$ for his final turn, so we assume that every bodyguard is at most distance two from the president before his final move. 

    Let $S$ be the set of vertices $\{(2,2), (2,3), (3,2), (3,3)\}$. Without loss of generality, assume the president is on $(2,2)$.  First, we consider the case where a vertex adjacent to the president contains two or more bodyguards. If the president does not move, then it is not possible for any of these bodyguards to move to a different vertex adjacent to the president in one move and complete the surrounding. Thus, at this point, every vertex strictly adjacent to the president contains at most one bodyguard. We consider multiple cases: there are between zero and four bodyguards adjacent to the president, and each of these bodyguards are either in $S$ or not.
    
First, we consider the case where there is a bodyguard that is neither adjacent to the president nor in $S$. Since the bodyguards are at most distance two away from the president, that bodyguard is either on $(1,1)$, $(1,3)$, $(2,4)$, $(3,1)$ or $(4,2)$. If there is a bodyguard on $(1,1)$, $(1,3)$ or $(2,4)$, the president can move to $(3,2)$ and now that bodyguard cannot move to a vertex adjacent to the president on their next move. If that bodyguard is on $(3,1)$ or $(4,2)$, then the president can move to $(2,3)$ and again that bodyguard cannot move to a vertex adjacent to the president in one move. 
    
    Suppose instead that every bodyguard is either adjacent to the president or in $S$. Since the president is one move away from being surrounded, there can be anywhere between zero and three bodyguards adjacent to the president. So, we have the following four cases:
    \begin{itemize}
        \item[(i):] there are exactly three bodyguards adjacent to the president that may or may not be in $S$ and the remaining bodyguard is in $S$ and not adjacent to the president, 
        \item[(ii):] there are exactly two bodyguards adjacent to the president that may or may not be in $S$ and the remaining two bodyguards are in $S$ and not adjacent to the president, 
        \item[(iii):] there is exactly one bodyguard adjacent to the president that may or may not be in $S$ and the remaining three bodyguards are in $S$ and not adjacent to the president, and 
        \item[(iv):] no bodyguards are adjacent to the president, but all four are in $S$. 
    \end{itemize}

    \textbf{Case (i):} Let $b$ be the bodyguard not adjacent to the president. The bodyguard $b$ is either on $(2,2)$ or $(3,3)$. Suppose either $(2,3)$ or $(3,2)$ is unoccupied. If the president moves to whichever of $(2,3)$ or $(3,2)$ is unoccupied, then the bodyguards cannot cover $(2,4)$ and $(4,2)$ respectively. Suppose, on the other hand, that there is a bodyguard on $(2,3)$ and $(3,2)$. Then, without loss of generality, the third bodyguard adjacent to the president is on $(1,2)$. Since there is no bodyguard on $(2,1)$, the president moves to $(3,2)$ and the bodyguards cannot cover $(3,1)$ and $(4,2)$ in one move.

    \textbf{Case (ii):} There are three subcases: either there is no bodyguard on $(2,2)$, there is one bodyguard on $(2,2)$, or there are two bodyguards on $(2,2)$. Suppose there are no bodyguards on $(2,2)$. Then the two bodyguards not adjacent to the president but still in $S$ are on $(3,3)$. Furthermore, since the president can be surrounded in one move by remaining at $(2,2)$, there must be bodyguards on $(1,2)$ and $(2,1)$. If the president moves to $(3,2)$ then no bodyguard can cover $(4,2)$.
    
    Suppose instead that one bodyguard is on $(2,2)$, forcing the other bodyguard onto $(3,3)$. Note that the two bodyguards adjacent to the president may or may not be on $S$. If the president remains on $(2,2)$, one of the two bodyguards adjacent to the president is forced to be on either $(1,2)$ or $(2,1)$. If there is a bodyguard on $(1,2)$ and $(2,1)$ then the president movea to $(3,2)$ and avoids being surrounded since the vertex $(4,2)$ is not covered. Suppose there is a bodyguard on $(1,2)$ and not $(2,1)$. Then the fourth bodyguard is either on $(2,3)$ or $(3,2)$.  If the president moves to whichever of these is not occupied, then either $(4,2)$ or $(2,4)$ respectively will not be covered. By symmetry, the same is true if instead there is a bodyguard on $(2,1)$ and not $(1,2)$. Therefore, there cannot be one bodyguard on $(2,2)$.
    
    Finally, suppose the two bodyguards not adjacent to the president are on $(2,2)$. If the president moves to $(3,2)$, then neither of the bodyguards on $(2,2)$ can get to $(3,1)$, $(3,3)$ or $(4,2)$ in one move. So it is not possible for all four bodyguards to cover those vertices in one move. Thus the president cannot be surrounded in one move.

    Case (iii) and Case (iv) follow similarly. As in Case (ii), we consider all combinations of possible bodyguard positions. We conclude that four bodyguards are not sufficient to surround the president, and the result follows.
\end{proof}

To determine the bodyguard numbers of all $2$-dimensional Cartesian grids, we make use of the following result  on the cop number of the Cartesian product of trees. \cite{MM87}

\begin{theorem}\label{thm: cop num of Cartesian prod of trees}
    \cite{MM87} If $T_1, \dots, T_k$ are trees such that each $T_i$ has more than one vertex, then $$c(\square^k_{i=1} T_i) = \left\lceil\frac{k+1}{2}\right\rceil.$$
\end{theorem}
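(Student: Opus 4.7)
The plan is to prove the upper and lower bounds separately, using induction that advances by two coordinates at a time for the upper bound, and a retraction to the hypercube for the lower bound.

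For the upper bound $c(\square_{i=1}^k T_i) \leq \lceil (k+1)/2 \rceil$, I would induct on $k$ with base cases $k=1$ (trees are cop-win, so $c(T)=1$) and $k=2$, where two cops win on $T_1 \square T_2$: the first cop shadows the robber's projection onto $T_1$ using the one-cop strategy available in a cop-win tree, the second cop shadows the projection onto $T_2$, and a direct argument shows that once both projections are matched the robber is caught on his next move in either coordinate. The inductive step jumps from $k-2$ to $k$: to the $\lceil (k-1)/2 \rceil$ cops provided by induction on $\square_{i=1}^{k-2} T_i$, I add one new cop whose job is to permanently shadow the $T_{k-1}$-projection of the robber. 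Because shadowing a tree is a one-cop task that only demands a move when the robber changes that coordinate, each shadowing cop has ``free'' moves available in the remaining coordinates on any turn when the robber does not perturb her assigned tree. The key lemma, proved by carefully scheduling these free moves, is that the inductive cops can simultaneously take on responsibility for $T_k$ in addition to $T_1,\dots,T_{k-2}$, so only one additional cop is actually required when two new coordinates are added.

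For the lower bound $c(\square_{i=1}^k T_i) \geq \lceil (k+1)/2 \rceil$, I would exploit the fact that $P_2$ is a retract of any tree on at least two vertices: pick a root and map vertices to the two endpoints of an incident edge according to the parity of distance from the root. Since Cartesian products are functorial with respect to retractions, the $k$-dimensional hypercube $Q_k = \square_{i=1}^k P_2$ is a retract of $\square_{i=1}^k T_i$. The analogue of Theorem \ref{thm: Bodyguard number of retracts} for the cop number, due to Berarducci and Intrigila, then gives $c(\square_{i=1}^k T_i) \geq c(Q_k)$, reducing the problem to showing $c(Q_k) \geq \lceil (k+1)/2 \rceil$. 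For this, I would use a potential-function argument on the multiset of Hamming distances from the cops to the robber. At each turn, a robber playing against $c < \lceil (k+1)/2 \rceil$ cops can apply the pigeonhole principle to find a coordinate in which no adjacent cop threatens him and flip that bit, keeping the potential bounded away from zero.

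The main obstacle is really the hypercube lower bound: the upper-bound shadowing scheme is a bookkeeping exercise once the ``two-trees-for-one-cop'' pattern is isolated, but controlling the potential function on $Q_k$ requires care because the hypercube is vertex-transitive and abounds in $4$-cycles, so the robber must simultaneously dodge every cop's attempt to reduce Hamming distance. I expect the core of the argument to be an invariant saying that if the robber picks his initial vertex to maximise total Hamming distance to the cops, he can always maintain at least one coordinate in which a majority of cops disagree with his bit, and flip into that coordinate to escape.
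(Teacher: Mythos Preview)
The paper does not prove this theorem at all: it is quoted, without proof, as a result of Maamoun and Meyniel \cite{MM87} and then used as a black box in Theorems \ref{thm: bodyguard num of 2d Cartesian grid} and \ref{thm: bodyguard num of k-dim cartesian grid}. There is therefore no ``paper's own proof'' to compare your proposal against.

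As for the proposal on its own merits: the upper-bound scheme (one additional cop buys two new tree factors, with the inductive cops spending their ``idle'' turns on the extra coordinate) is exactly the shape of the Maamoun--Meyniel argument, and your retraction of each $T_i$ onto an edge by distance parity is a correct way to reduce the lower bound to $c(Q_k)\ge\lceil(k+1)/2\rceil$. The part that is not yet an argument is the hypercube lower bound. ``Flip a coordinate in which a majority of cops disagree'' does not prevent capture: capture is a local event caused by a single cop at Hamming distance $1$, and that cop may well be in the minority on the coordinate you flip. What the robber actually has to maintain is that \emph{every} cop stays at Hamming distance at least $2$ after his move, and a total-distance potential does not see this. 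You need an invariant that controls the set of cops at distance exactly $2$ and shows that with only $\lfloor k/2\rfloor$ of them the robber always has a safe coordinate to flip into; this is the genuine content of the lower bound, and your sketch does not yet supply it.
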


Combining the results from this section, we obtain the following.

\begin{theorem}\label{thm: bodyguard num of 2d Cartesian grid}
    If $n\leq m$, then 
    \[
    B(P_n \square P_m) = \begin{cases}
	2                  	&\text{if $n, m = 2$,} \\
    3                   &\text{if $n = 2$ and $m \geq 3$,} \\
    4                   &\text{if $n=3$ and $m \geq 3$,} \\
	5					&\text{if $n, m\geq 4$.}
    \end{cases}
    \]
\end{theorem}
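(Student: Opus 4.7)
The plan is to assemble the result directly from the lemmas and theorems already proved in this section. The first three rows of the case analysis are essentially immediate from Lemma~\ref{lem: bguard num 2xn Cartesian grid} (for $n=2$) and Lemma~\ref{lem: P3 x Pn} (for $n=3$), so the only substantive work is the case $n, m \geq 4$, for which we need to prove $B(P_n \square P_m) = 5$.

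For the upper bound when $n, m \geq 4$, I would invoke Theorem~\ref{thm: Cartesian product upper bound}, which gives
\[
B(P_n \square P_m) \;\leq\; B(P_n) + B(P_m) + c(P_n \square P_m) - 1.
\]
Since $P_n$ and $P_m$ are trees on at least $3$ vertices with exactly two leaves each, Theorem~\ref{thm: bodyguard num of trees} yields $B(P_n) = B(P_m) = 2$. Applying Theorem~\ref{thm: cop num of Cartesian prod of trees} with $k=2$ gives $c(P_n \square P_m) = \lceil 3/2 \rceil = 2$. Substituting yields $B(P_n \square P_m) \leq 2 + 2 + 2 - 1 = 5$.

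For the lower bound, the key observation is that $P_4$ is a retract of $P_k$ for any $k \geq 4$ (fold both tails onto the nearest of the four endpoints' images), and retractions extend coordinatewise under Cartesian products, so $P_4 \square P_4$ is a retract of $P_n \square P_m$ whenever $n, m \geq 4$. Theorem~\ref{thm: Bodyguard number of retracts} then implies $B(P_4 \square P_4) \leq B(P_n \square P_m)$, and Lemma~\ref{lem: P4 x P4} gives $B(P_4 \square P_4) = 5$, so $B(P_n \square P_m) \geq 5$.

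Combining the two bounds finishes the case $n, m \geq 4$, and collecting the cases completes the proof. No step is really an obstacle here; the only thing to be careful about is justifying the retraction claim (and thereby the use of Theorem~\ref{thm: Bodyguard number of retracts} rather than trying to argue via subgraphs, which would be invalid by Observation~\ref{thm: bodyguard num not monotonic wrt subgraphs}).
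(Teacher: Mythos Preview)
Your proposal is correct and follows essentially the same assembly-of-lemmas approach as the paper, which simply states that the result follows by combining Theorem~\ref{thm: Cartesian product upper bound}, Lemmas~\ref{lem: bguard num 2xn Cartesian grid}--\ref{lem: P4 x P4}, and Theorem~\ref{thm: cop num of Cartesian prod of trees}. The one place you are more explicit than the paper is the lower bound for general $n,m\ge 4$: the paper only cites Lemma~\ref{lem: P4 x P4} without saying how $B(P_4\square P_4)=5$ transfers to larger grids, whereas you invoke Theorem~\ref{thm: Bodyguard number of retracts} via the coordinatewise retraction $P_n\square P_m \to P_4\square P_4$, which cleanly closes that gap.
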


\begin{proof} The cases when $n \le 3$ are handled by Lemmas~\ref{lem: bguard num 2xn Cartesian grid} and \ref{lem: P3 x Pn}. 

    Now suppose $n\geq 4$. By Theorem \ref{thm: cop num of Cartesian prod of trees}, $c(P_n \square P_m) =  2$. By Theorem \ref{thm: Cartesian product upper bound}, we have that \[B(P_n \square P_m) \leq B(P_n) + B(P_m) + c(P_n\square P_m) - 1 = 2+2+2-1=5.\] The graph $P_{4} \square P_4$ is a retract of $P_n \square P_m$, so by Theorem~\ref{thm: Bodyguard number of retracts} and Lemma~\ref{lem: P4 x P4} we obtain \[5 = B(P_4 \square P_4) \leq B(P_n \square P_m) \leq 5\] and so equality holds. 
\end{proof}

We note that Theorem \ref{thm: bodyguard num of 2d Cartesian grid} gives an infinite family of graphs for which the bound in Theorem \ref{thm: Cartesian product upper bound} is tight. 

We end this section by giving upper bounds on the bodyguard numbers for $k$-dimensional Cartesian grids and hypercubes.

\begin{theorem}\label{thm: bodyguard num of k-dim cartesian grid}
    If $n_1, \dots, n_k \geq 2$, then
    \begin{equation*}
        B(\square^k_{i=1} P_{n_i}) \leq \left\lfloor \frac{5k}{2} \right\rfloor.
    \end{equation*}
\end{theorem}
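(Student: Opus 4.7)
The plan is to mimic the proof of Theorem~\ref{thm: Cartesian product upper bound} but to apply its construction to all $k$ path factors at once, rather than iteratively combining two factors at a time. Iterating Theorem~\ref{thm: Cartesian product upper bound} pays a cop-number overhead at every step and quickly overshoots $\lfloor 5k/2\rfloor$ (one can check that even the pairwise recursion $B(\square^k P_{n_i}) \le B(\square^{k-2} P_{n_i}) + B(P_{n_{k-1}}\square P_{n_k}) + c(\square^k P_{n_i}) - 1$ gives roughly $3k$); the gain comes from sharing a \emph{single} team of helpers across the setups of all $k$ dedicated bodyguard groups.

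Concretely, for each coordinate $i\in\{1,\dots,k\}$ I would reserve $B(P_{n_i})\le 2$ \emph{dedicated} bodyguards whose goal is to occupy the two coordinate-$i$ neighbors $p\pm e_i$ of the president's position $p$; this matches the $1$-D path strategy from Theorem~\ref{thm: bodyguard num of trees} (with $B(P_2)=1$ from Theorem~\ref{thm: low bodyguard num characterizations}). On top of that, I would add $c(\square^k_{i=1}P_{n_i}) - 1 = \lceil (k+1)/2\rceil - 1$ \emph{helper} bodyguards, using Theorem~\ref{thm: cop num of Cartesian prod of trees}. The total count is at most $2k + \lceil (k+1)/2\rceil - 1$, which is easily checked to equal $\lfloor 5k/2\rfloor$ in both parities.

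The setup is carried out exactly as in the proof of Theorem~\ref{thm: Cartesian product upper bound}: a single dedicated bodyguard joins all $c-1$ helpers to form a team of $c(\square^k_{i=1}P_{n_i})$ bodyguards and runs a winning cop strategy to reach its president-relative target $p\pm e_i$ in finitely many moves; once placed, the bodyguard stays glued to this target (always adjacent to the president, hence reachable in one step each turn) while the helpers move on to the next dedicated bodyguard. After all dedicated bodyguards are in position, maintenance is routine: when the president moves $p\mapsto p'=p+e_j$, the two coordinate-$j$ bodyguards slide so that one lands on $p=p'-e_j$ and the other advances to $p'+e_j$, while every dedicated bodyguard assigned to any coordinate $i'\neq j$ simply translates by $e_j$ to stay at $p'\pm e_{i'}$. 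Since each grid edge alters exactly one coordinate, every bodyguard moves at most once per turn.

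The main obstacle will be the setup phase: one must argue that the helper team's cop strategy can chase a \emph{president-relative} target (whose moves are determined by the president's rather than being adversarial) and that the already-placed dedicated bodyguards can keep shadowing the president throughout setup without leaving their posts. Both points are resolved in the same way as in the proof of Theorem~\ref{thm: Cartesian product upper bound}; once they are in hand, the inequality $\sum_{i=1}^k B(P_{n_i}) + c(\square^k_{i=1}P_{n_i}) - 1 \le \lfloor 5k/2\rfloor$ finishes the proof.
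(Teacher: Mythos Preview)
Your proposal is correct and is essentially the same argument as the paper's: the paper also uses $2k$ bodyguards to occupy the $2k$ coordinate neighbours of the president, plus $c(\square^k_{i=1}P_{n_i})-1$ helpers that are recycled to install the dedicated bodyguards one at a time via a cop-winning strategy, and then invokes Theorem~\ref{thm: cop num of Cartesian prod of trees} for the arithmetic $2k+\lceil(k+1)/2\rceil-1=\lfloor 5k/2\rfloor$. The only detail the paper makes explicit that you leave implicit is the boundary handling: the paper plays on the enlarged grid $\square^k_{i=1}P_{n_i+2}$ (with the president confined to the inner copy) so that the target $p\pm e_i$ always exists, whereas you appeal to the per-path strategy $B(P_{n_i})$---either fix works.
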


\begin{proof}
    Suppose the bodyguards are playing on the graph $\square^k_{i=1} P_{n_i + 2}$ with vertices labelled $(x_1, \dots, x_k)$ where $0\leq x_i\leq n_i + 1$ while the president plays on the subgraph induced by the vertices $(y_1,\dots, y_k)$ where $1\leq y_i\leq n_i$. If the president is on the vertex $(v_1,\dots, v_k)$ and the bodyguards can occupy the vertices $(v_1, \dots, v_{i-1}, v_i + a_i, v_{i+1}, \dots, v_k)$ where $1\leq i\leq k$ and for each $i$, $a_i \in \{1, -1\}$, then the president can never escape being surrounded regardless of how he moves. Therefore, it suffices to show that $\left\lfloor \frac{5k}{2} \right\rfloor$ bodyguards can move from a position where the president is not surrounded to a position where the president is surrounded. 
    
    Suppose we are playing with $2k + c(\square^k_{i=1} P_{n_i}) - 1$ bodyguards. Note that $\Delta(\square^k_{i=1} P_{n_i}) = 2k$. Using a cop winning strategy, $c(\square^k_{i=1} P_{n_i})$ bodyguards can force one of the $2k$ neighbours of the president, say $b_i^+ = (v_1,\dots, v_{i-1}, v_i + 1, v_{i+1}, \dots, v_k)$, to become occupied by a bodyguard. Once $b_i^+$ is occupied by a bodyguard, the bodyguard that moved onto $b_i^+$ can always move to a new $b_i^+$ (as determined by the president's position) after every president turn. The $c(\square^k_{i=1} P_{n_i})-1$ bodyguards that did not move onto $b_i^+$ then combine with one of the remaining $2k-2$ bodyguards to force a bodyguard onto another one of the president's neighbours. They can continue this until eventually all of the president's neighbours (at most $2k$) are occupied by bodyguards.

Therefore, by the above strategy and Theorem \ref{thm: cop num of Cartesian prod of trees}, we need $$2k + \frac{k+1}{2} - 1 = \frac{5k-1}{2}$$ bodyguards if $k$ is odd and $$2k + \frac{k}{2} + 1 - 1 = \frac{5k}{2}$$ bodyguards if $k$ is even.
\end{proof}

\begin{theorem}\label{thm: bodyguard num of hypercubes}
    If $k\geq 3$, then
    \begin{equation*}
        k+1 \leq B(Q_k) \leq \left\lfloor \frac{3k}{2} \right\rfloor.
    \end{equation*}
\end{theorem}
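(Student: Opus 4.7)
For the upper bound my plan is to adapt the shadow-plus-catching strategy from the proof of Theorem~\ref{thm: bodyguard num of k-dim cartesian grid}. Identifying $V(Q_k)$ with $\mathbb{Z}_2^k$ and writing $e_i$ for the $i$-th standard basis vector, I designate $k$ bodyguards as \emph{shadow} bodyguards, one per coordinate $i$, each with the goal of standing at $v_p + e_i$ where $v_p$ is the president's current vertex; once all $k$ are anchored the president is surrounded. The surround is maintained by \emph{mimicry}: when the president flips bit $j$, each shadow flips bit $j$ too, which is a legal one-edge move since $(v_p+e_i)+e_j = (v_p+e_j)+e_i$ is adjacent to $v_p+e_i$ in $Q_k$. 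Since $Q_k$ is the $k$-fold Cartesian product of $K_2$, Theorem~\ref{thm: cop num of Cartesian prod of trees} gives $c(Q_k) = \lceil (k+1)/2 \rceil$, and a short parity check confirms $k + c(Q_k) - 1 = \lfloor 3k/2 \rfloor$. To anchor a shadow, I combine $c(Q_k)-1$ free bodyguards with one unanchored shadow candidate into a catching team of $c(Q_k)$ bodyguards and run a cop-winning strategy on $Q_k$ against the moving target vertex $v_p + e_i$ (which is a valid robber since it moves or stays in sync with the president's bit flip). Once the team lands on $v_p+e_i$, the landing bodyguard permanently switches to mimicry mode and the catching team frees up. Iterating $k$ times anchors all shadows using $k + c(Q_k) - 1 = \lfloor 3k/2 \rfloor$ bodyguards.

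The lower bound $B(Q_k) \geq k$ is immediate from Lemma~\ref{lem: trivial lower bounds on bodyguard num} and $\Delta(Q_k) = k$. To strengthen this to $k+1$ for $k \geq 3$, I would exhibit a president strategy defeating $k$ bodyguards. The structural core is a rigidity observation: if exactly $k$ bodyguards stand at $N(v_p)$ and the president flips bit $j$, then a Hall's theorem analysis shows that each bodyguard at $v_p + e_i$ has only two surround-preserving one-step responses---to $v_p$ or to $v_p + e_i + e_j$. Thus a bodyguard at a non-shadow vertex of the form $v_p + e_i + e_j$ can only contribute as a shadow for direction $i$ or $j$ in the next round, and if the president then moves in direction $e_l$ with $l \notin \{i, j\}$---which exists because $k \geq 3$---the needed new neighbor $v_p + e_l + e_i$ lies at Hamming distance $2$ from her and so is unreachable in one step. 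I would use this rigidity to argue that during any attempt at first surround, the president can always choose a bit direction that strands at least one bodyguard too far from her assigned target neighbor.

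The main technical obstacle is lifting this single-turn observation to an infinite-horizon president strategy that works from the beginning of the game. I anticipate maintaining an invariant of the form \emph{``some neighbor of $v_p$ has its closed neighborhood free of not-yet-committed bodyguards,''} and verifying inductively that the president can preserve this invariant through every bodyguard move. The case analysis of bodyguard responses during the setup phase, where several proto-shadows converge in parallel, will constitute the bulk of the work.
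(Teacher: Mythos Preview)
Your upper bound is correct and matches the paper exactly: it too applies the catching-plus-mimicry technique of Theorem~\ref{thm: bodyguard num of k-dim cartesian grid} with $\Delta(Q_k)=k$ shadows and $c(Q_k)-1$ auxiliary catchers, giving $k+\lceil(k+1)/2\rceil-1=\lfloor 3k/2\rfloor$.

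For the lower bound you already hold the decisive observation but fail to recognise that it finishes the proof; no Hall's-theorem matching analysis, no ``not-yet-committed'' invariant, and no setup-phase case analysis is required. The paper's argument is simply this: the president maintains, after each of his own moves, the invariant that \emph{some bodyguard is at Hamming distance at least $3$}. He establishes it initially by starting at the antipode of an arbitrary bodyguard (distance $k\ge 3$). When the bodyguards then move, the far bodyguard can drop to distance at most $2$; if she is now at distance exactly $2$, say at $v_p+e_i+e_j$, the president flips any bit $l\notin\{i,j\}$---which exists precisely because $k\ge 3$---and she is back at distance $3$; otherwise he may stay put. Either way the invariant is restored, so after every bodyguard turn at least one of the $k$ bodyguards is non-adjacent to the president, and with only $k$ bodyguards available for $k$ neighbours he is never surrounded. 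Your ``strand the bodyguard at $v_p+e_i+e_j$ by moving in direction $e_l$'' is exactly this maintenance step; the missing realisation is that iterating it from the antipodal start \emph{is} the infinite-horizon strategy, with no further scaffolding needed.
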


\begin{proof}
    We can label the vertices in $Q_k$ with vectors $(a_1, \dots, a_k)$ where $a_i \in \{0,1\}$ for each $1\leq i\leq k$ such that two vertices are adjacent in $Q_k$ if they differ in exactly one of the $k$ coordinates in their labels. 

    First, we consider the lower bound. Assume that $k$ bodyguards can surround the president. At the start of the game the president places himself on a vertex $(x_1, \dots, x_k)$ such that one of the bodyguards is on $(x^\prime_1, \dots, x^\prime_k)$ where $x_i \not\equiv x^\prime_i \mod{2}$. Since $k\geq 3$, the president is beginning the game distance at least three from one of the bodyguards. Thus, the bodyguards cannot surround the president on their first move. 
    
    Suppose that the bodyguards surround the president on some move after their first move. Consider the president's last move before being surrounded. Let $(v_1, \dots, v_k)$ be the vertex occupied by the president. Since the president is not surrounded, there is a bodyguard, say $b$, that is distance two from the president. Let $(v_1, \dots, v_{i-1}, v^\prime_i, v_{i+1}, \dots, v_{j-1}, v^\prime_j, v_{j+1}, \dots, v_k)$ where $v_i \not\equiv v^\prime_i \mod{2}$ and $v_j \not\equiv v^\prime_j \mod{2}$ be the vertex occupied by $b$. To avoid being surrounded, the president moves to a vertex whose label differs from $(v_1, \dots, v_{i-1}, v^\prime_i, v_{i+1}, \dots, v_{j-1}, v^\prime_j, v_{j+1}, \dots, v_k)$ in one coordinate that is neither the $i$th coordinate nor the $j$th coordinate. Then the president is distance three from $b$, and so $b$ cannot move adjacent to the president on the next bodyguard turn. Since the degree of every vertex in $Q_k$ is $k$, all $k$ bodyguards must be adjacent to the president, a contradiction. 

    The upper bound $B(Q_k) \leq \left\lfloor\frac{3k}{2}\right\rfloor$ is obtained by using the technique from the proof of Theorem \ref{thm: bodyguard num of k-dim cartesian grid}. 
\end{proof}

\section{Other Graph Products}\label{sec: other prods}

In this section we give results for some strong and lexicographic products of graphs. 
The \emph{strong product} of $G$ and $H$, denoted $G\boxtimes H$, has vertex set $V(G) \times V(H)$ and edges
$\{(u,v), (x,y)\}$ where $ux \in E(G)$ and $v=y$,
$u=x$ and $vy\in E(H)$, or $ux \in E(G)$ and $vy\in E(H)$.
The \emph{lexicographic product} of $G$ and $H$, denoted $G\bullet H$, has the same vertex set and edges $\{(u,v), (x,y)\}$ where $u=x$ and $vy\in E(H)$, or $ux\in E(G)$.

The following theorem gives a result for the strong product of two graphs analogous to Theorem \ref{thm: Cartesian product upper bound}.

\begin{theorem}\label{thm: bodyguard num of general strong prod}
    Let $G$ and $H$ be connected graphs. If $c(G) \leq c(H)$ then 
    \begin{align*}
        B(G\boxtimes H) \leq B(G)\left( B(H) + 1\right) + B(H) + c(G) - 1.
    \end{align*}
\end{theorem}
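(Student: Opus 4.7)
The plan is to adapt the strategy of Theorem~\ref{thm: Cartesian product upper bound} to the strong product. Let $(u_p,v_p)$ denote the president's current vertex. I will describe an \emph{eternal position} in which the bodyguards surround the president after every bodyguard move, then argue that it can be reached from any initial configuration in finitely many turns. The eternal position consists of: a group $\mathcal{H}$ of $B(H)$ bodyguards implementing a bodyguard-winning strategy on the copy $\{u_p\}.H$ that tracks $v_p$, so that after their moves they occupy $\{(u_p,v):v\in N_H(v_p)\}$; and for each $w\in N_H[v_p]$ a group $\mathcal{G}_w$ of $B(G)$ bodyguards residing in row $G.\{w\}$ and implementing a bodyguard-winning strategy on $G$ that tracks $u_p$, so that after their moves they occupy $\{(u,w):u\in N_G(u_p)\}$. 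Since $|N_H[v_p]|\leq \Delta(H)+1\leq B(H)+1$ by Lemma~\ref{lem: trivial lower bounds on bodyguard num}, at most $B(H)+1$ such $\mathcal{G}_w$ groups are required, accounting for at most $B(G)(B(H)+1)$ bodyguards. Combined with $|\mathcal{H}|=B(H)$ this gives $B(G)(B(H)+1)+B(H)$ structured bodyguards, and the remaining $c(G)-1$ are held in reserve as \emph{helpers} for the setup phase.

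The first step is to check that the eternal position surrounds the president: the union of the covered vertices is exactly $(N_G[u_p]\times N_H[v_p])\setminus\{(u_p,v_p)\}=N_{G\boxtimes H}((u_p,v_p))$. The second step is maintenance. When the president moves from $(u_p,v_p)$ to $(u_p',v_p')$ with $u_p'\in N_G[u_p]$ and $v_p'\in N_H[v_p]$, the $\mathcal{H}$-group performs its $H$-strategy move from $v_p$ to $v_p'$ while simultaneously translating its $G$-coordinate from $u_p$ to $u_p'$ by a single diagonal strong-product move; each $\mathcal{G}_w$ group performs its $G$-strategy move tracking $u_p\to u_p'$, and if $w\notin N_H[v_p']$, migrates diagonally to a row $w'\in N_H[v_p']\setminus N_H[v_p]$. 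Both components are individually legitimate because the $G$- and $H$-shifts are edges or loops in the respective factors. The third step is setup. Unlike the Cartesian case, the helpers here only need to deal with $G$ directly: the $c(G)-1$ helpers together with any single target bodyguard form $c(G)$ ``mobile cops'' that remain in the president's current row $G.\{v_p\}$ by matching his $H$-coordinate via diagonal moves each turn, and within that row they execute a $G$-cop-winning strategy on the president's $G$-projection $u_p$ to force the target bodyguard onto any chosen vertex of the row in finitely many turns. Iterating over the structured bodyguards, one at a time, eventually reaches the eternal position.

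The main obstacle is the maintenance step: I must specify a bijection between the rows assigned to $\mathcal{G}$-groups before the president's move (indexed by $N_H[v_p]$) and the rows assigned after (indexed by $N_H[v_p']$), in such a way that each shift $w\mapsto w'$ satisfies $w'\in N_H[w]$ and so is realizable by a single diagonal move. A clean bookkeeping is to couple each $\mathcal{G}_w$ with a specific bodyguard of $\mathcal{H}$ (with the ``president-row'' case $w=v_p$ treated as a distinguished slot), so that when $\mathcal{H}$ rearranges according to its own $H$-winning strategy, the $\mathcal{G}$-groups migrate in parallel. A secondary subtlety is making precise what it means to ``capture a vertex relative to the president'' during setup, since the target vertex moves with the president each turn; but because the helpers track the president's $H$-coordinate every turn and the $G$-cop strategy applied to $u_p$ in the current president row terminates regardless of the president's $H$-movements, the setup completes in finitely many turns and the eternal position is attained.
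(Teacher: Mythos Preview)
Your argument is correct and follows the same two-phase plan as the paper (describe an eternal configuration on $B(G)(B(H)+1)+B(H)$ bodyguards, then use $c(G)-1$ helpers plus a row-based cop strategy to install it), but your decomposition of the eternal configuration is dual to the paper's. The paper assigns $B(H)+1$ bodyguards to each of the $B(G)$ \emph{columns} $\{x_i\}.H$, where $x_1,\dots,x_{B(G)}$ are the positions of a $G$-winning strategy, and has each column-team surround and occupy the president's shadow $(x_i,v_p)$ there; migration between columns is then dictated directly by the $G$-strategy on the $x_i$'s. You instead assign $B(G)$ bodyguards to each of at most $B(H)+1$ \emph{rows} $G.\{w\}$ with $w\in N_H[v_p]$, and let each row-team play a $G$-strategy against $u_p$; your coupling of row-teams to individual $\mathcal{H}$-bodyguards is what furnishes the row-migration bijection. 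Both schemes cover the same set $N_G[u_p]\times N_H[v_p]\setminus\{(u_p,v_p)\}$ and both rely on the diagonal moves of the strong product to update the two coordinates simultaneously. Your version makes the migration step more explicit, at the cost of needing $\mathcal{H}$ to be set up before the row-teams can be coupled; the paper's version avoids the coupling device but is terser about maintenance. One small overstatement: a $G$-cop strategy lands the target bodyguard on $(u_p,v_p)$, not on ``any chosen vertex of the row''; this is harmless, since from $(u_p,v_p)$ the bodyguard can shift diagonally to the required adjacent row and thereafter track its coupled $\mathcal{H}$-bodyguard while the row-team runs its own $G$-setup.
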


\begin{proof}

    Let $(u_p, v_p)$ denote the president's vertex on $G\boxtimes H$. For any $x\in V(G)$, we say that $(x,v_p)$ is the \emph{shadow} of the president in $H[x]$, which is a function of the president's vertex. If the bodyguards can set up a winning strategy on $H[u_p]$ with $B(H)$ bodyguards then the president will be indefinitely surrounded on $H[u_p]$. Let $x_1, \dots, x_{B(G)}$ denote the vertices in $G$ that the bodyguards would occupy if they were implementing a winning strategy. If we also have $B(H) + 1$ bodyguards on $H[x_i]$ for each $1\leq i\leq B(G)$ such that the bodyguards are set up to indefinitely surround the shadow of the president in $H[x_i]$ and indefinitely occupy the shadow of the president within $H[x_i]$, then the bodyguards can indefinitely surround the president in $G\boxtimes H$ by making use of the winning bodyguard strategies in $G$ and $H$. In total, this takes at most $B(G)(B(H) + 1) + B(H)$ bodyguards. 

    We claim that by adding $c(G) - 1$ more bodyguards on top of the \linebreak $B(G)(B(H) + 1) + B(H)$ bodyguards, the bodyguards can set up the above strategy to begin indefinitely surrounding the president. By the structure of the strong product, the $B(G)(B(H) + 1)$ bodyguards can move into their positions on $H[x_i], \dots, H[x_{B(G)}]$ to begin indefinitely occupying the shadow of the president's closed neighbourhood in finitely many moves. By using $c(G)$ bodyguards, in a finite number of moves one bodyguard can be placed onto $H[u_p]$. By repeating this $B(H)$ times, in finitely many moves $B(H)$ bodyguards will be on $H[u_p]$. Next, the $B(H)$ bodyguards use a bodyguard winning strategy on $H[u_p]$. Afterwards, the bodyguards can begin indefinitely surrounding the president as described above.
\end{proof}

Next, we obtain the bodyguard number of any $k$-dimensional strong grid. To do this, we make use of a result by Neufeld and Nowakowski \cite{NN98}.

\begin{theorem}\label{thm: cop number for strong products}(\cite{NN98})
    If $G$ and $H$ are connected graphs, then $c(G\boxtimes H) \leq c(G) + c(H) - 1$.
\end{theorem}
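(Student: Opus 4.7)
The plan is to show that $k + \ell - 1$ cops can win on $G \boxtimes H$, where $k = c(G)$ and $\ell = c(H)$. I would partition these cops into a ``$G$-team'' of $k$ cops and an ``$H$-team'' of $\ell$ cops that overlap in exactly one cop $A$, so the total number of cops is indeed $k + \ell - 1$.

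In Phase~1 the $G$-team runs a winning $G$-strategy while looking only at the $G$-coordinates of the cops and the robber. Since every move of the robber in $G \boxtimes H$ projects to a valid move-or-stay of its $G$-coordinate in $G$, the $G$-strategy works unchanged and, after finitely many turns, some cop in the $G$-team has the same $G$-coordinate as the robber. By permuting cop labels inside the $G$-team I can assume this cop is $A$.

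In Phase~2, cop $A$ permanently mirrors the robber's $G$-coordinate: whenever the robber makes a $G$-move (or stays), $A$ copies the same move in its $G$-component. The crucial feature of the strong product is that a single edge can change both coordinates simultaneously, so while $A$ is tracking the robber in $G$ it is still free to move its $H$-component to any neighbour (or stay) each turn. Thus the $H$-coordinate of $A$ behaves like a completely unconstrained cop on $H$. The $H$-team, namely $A$ together with the $\ell - 1$ cops reserved for this phase, then executes a winning $\ell$-cop strategy on $H$ aimed at the robber's $H$-coordinate. In finitely many turns some cop in the $H$-team matches the robber's $H$-coordinate.

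The main obstacle is guaranteeing that this matching cop is $A$ itself, since $A$ is the only cop currently known to also have the correct $G$-coordinate. I would handle this by choosing the $H$-strategy so that $A$ is the designated capturer: an $\ell$-cop winning strategy on $H$ depends only on the multiset of cop positions, so one may pre-assign which cop performs the final capture (for instance, by relabelling cops, or, once some other cop first reaches the robber's $H$-coordinate and begins shadowing it there, by using the remaining cops of the $H$-team to confine the $H$-coordinate to a bounded region while $A$ closes in). Once $A$ holds both the correct $G$-coordinate and the correct $H$-coordinate, $A$ occupies the robber's vertex, proving $c(G \boxtimes H) \leq c(G) + c(H) - 1$.
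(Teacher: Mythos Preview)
The paper does not supply a proof of this statement; it is quoted from Neufeld and Nowakowski \cite{NN98} and used as a black box. Your two-phase plan --- capture the $G$-shadow first, then the $H$-shadow, using the fact that in the strong product a cop holding one coordinate remains free in the other --- is exactly the Neufeld--Nowakowski idea.

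The gap is in the step you yourself flag. The assertion that ``one may pre-assign which cop performs the final capture'' by relabelling is not valid: a positional $\ell$-cop strategy determines what the \emph{multiset} of cop positions should become, but which physical cop lands on the robber is dictated by the robber's play, and relabelling mid-game would swap $G$-coordinates as well and destroy $A$'s $G$-shadow. Your fallback --- let some other cop $B$ pin the $H$-coordinate and then have $A$ close in --- reduces to a single-cop pursuit of the robber's $H$-coordinate by $A$, which fails whenever $c(H)\ge 2$.

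The standard repair inserts an extra stage between your two phases. After $A=C_1$ holds the $G$-shadow, take each reserved cop $D_i$ ($1\le i\le \ell-1$) in turn and run the $G$-strategy with the $k$ cops $D_i,C_2,\ldots,C_k$; whoever captures the $G$-shadow is renamed $D_i$, and the old $D_i$ takes over the vacated $C$-label. Meanwhile $C_1$ and the earlier $D_j$'s simply copy the robber's $G$-move each turn to keep their own shadows. After $\ell-1$ such rounds, \emph{all} of $C_1,D_1,\ldots,D_{\ell-1}$ share the robber's $G$-coordinate. Only then run the $H$-strategy with these $\ell$ cops: whichever one captures the $H$-shadow already has the correct $G$-coordinate, so the robber is caught in $G\boxtimes H$. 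The cop count remains $c(G)+c(H)-1$.
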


\begin{theorem}\label{thm: Bodyguard number of strong grid}
    If $k\in \mathbb{Z}^+$ and $n_i \geq 3$ for $1\leq i\leq k$ then $$B(\boxtimes_{i=1}^k P_{n_i}) = 3^k - 1.$$
\end{theorem}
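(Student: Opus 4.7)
The plan is to prove the two inequalities $B(\boxtimes_{i=1}^k P_{n_i}) \geq 3^k - 1$ and $B(\boxtimes_{i=1}^k P_{n_i}) \leq 3^k - 1$ separately. The lower bound will be an immediate consequence of Lemma \ref{lem: trivial lower bounds on bodyguard num}, while the upper bound will follow by induction on $k$ driven by Theorem \ref{thm: bodyguard num of general strong prod} together with Theorem \ref{thm: cop number for strong products}.

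For the lower bound, I would pick any interior vertex $(v_1, \ldots, v_k)$ with $2 \leq v_i \leq n_i - 1$ for every $i$; such a vertex exists because each $n_i \geq 3$. In the strong product each coordinate of a neighbour can independently be $v_i - 1$, $v_i$, or $v_i + 1$, and excluding the vertex itself leaves exactly $3^k - 1$ neighbours. Hence $\Delta(\boxtimes_{i=1}^k P_{n_i}) = 3^k - 1$, so Lemma \ref{lem: trivial lower bounds on bodyguard num} gives $B(\boxtimes_{i=1}^k P_{n_i}) \geq 3^k - 1$.

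For the upper bound I would induct on $k$. The base case $k = 1$ is immediate from Theorem \ref{thm: bodyguard num of trees}: $P_{n_1}$ has exactly two leaves, so $B(P_{n_1}) = 2 = 3^1 - 1$. For the inductive step with $k \geq 2$, set $G = \boxtimes_{i=1}^{k-1} P_{n_i}$ and $H = P_{n_k}$. Iterating Theorem \ref{thm: cop number for strong products} starting from $c(P_n) = 1$ gives $c(G) \leq 1$, and connectedness of $G$ forces equality, so $c(G) = c(H) = 1$ and the hypothesis $c(G) \leq c(H)$ of Theorem \ref{thm: bodyguard num of general strong prod} is satisfied. Plugging in the inductive hypothesis $B(G) = 3^{k-1} - 1$ together with $B(H) = 2$ yields
\begin{align*}
B(G \boxtimes H) &\leq B(G)\bigl(B(H)+1\bigr) + B(H) + c(G) - 1 \\
&= (3^{k-1}-1)\cdot 3 + 2 + 0 = 3^k - 1.
\end{align*}

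Combining the two bounds delivers the desired equality. There is no serious obstacle; the pleasant feature is that the recursion $B_k \leq 3 B_{k-1} + 2$ produced by Theorem \ref{thm: bodyguard num of general strong prod} exactly matches $3^k - 1 = 3(3^{k-1}-1) + 2$, so the upper bound saturates the degree-based lower bound at every level. The only real point of care is verifying $c(G) \leq c(H)$ before invoking Theorem \ref{thm: bodyguard num of general strong prod}, which is immediate once one observes that the strong product of paths has cop number $1$.
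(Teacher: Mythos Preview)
Your argument is correct, and it differs from the paper's in the upper-bound half. You proceed by induction on $k$, feeding $G=\boxtimes_{i=1}^{k-1}P_{n_i}$ and $H=P_{n_k}$ into Theorem~\ref{thm: bodyguard num of general strong prod}; the recursion $B_k\le 3B_{k-1}+2$ collapses exactly onto $3^k-1$. The paper instead argues directly: using Theorem~\ref{thm: cop number for strong products} to get $c(\boxtimes_{i=1}^k P_{n_i})=1$, it assigns each of the $3^k-1$ bodyguards her own target neighbour of the president and has her reach and thereafter track that neighbour via a one-cop capture strategy (the technique from the proof of Theorem~\ref{thm: bodyguard num of k-dim cartesian grid}); once every neighbour is occupied the bodyguards simply copy the president's moves. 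Your route is slicker and, as a bonus, exhibits an infinite family on which the bound of Theorem~\ref{thm: bodyguard num of general strong prod} is tight; the paper's route is more self-contained, yielding an explicit strategy without routing through the machinery of Theorem~\ref{thm: bodyguard num of general strong prod}.
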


\begin{proof}
    Note that $B(\boxtimes_{i=1}^k P_{n_i}) \geq \Delta(\boxtimes_{i=1}^k P_{n_i}) = 3^k - 1$. We label the vertices of $B(\boxtimes_{i=1}^k P_{n_i})$ by vectors of length $k$, $(v_1, \dots, v_k)$ where $1\leq v_i \leq n_i$ for each $1\leq i\leq k$.

    To show that $3^k - 1$ bodyguards can win on $\boxtimes_{i=1}^k P_{n_i}$, we use the same technique as in the proof of Theorem \ref{thm: bodyguard num of k-dim cartesian grid}. Let $G= \boxtimes_{i=1}^k P^\prime_{n_i}$ where $P^\prime_{n_i}$ is obtained by adding a leaf to each end of the path $P_{n_i}$. The vertices of $G$ are labelled the same way as the vertices of $\boxtimes_{i=1}^k P_{n_i}$ with additional vertices that have labels containing $0$ and $n_i + 1$. The bodyguards  play on $G$ while the president plays on $\boxtimes_{i=1}^k P_{n_i}$. 

    If the president is on the vertex $(v_1, \dots, v_k)$ and the bodyguards can occupy all vertices of the form $(v_1 + a_1, \dots, v_k + a_k)$ where $1\leq i\leq k$, for each $i$, $a_i \in \{1, 0, -1\}$, and $a_i$ is nonzero for at least one $1\leq i\leq k$, then the president can never escape being surrounded regardless of how he moves. By Theorem \ref{thm: cop number for strong products}, $c(\boxtimes_{i=1}^k P_{n_i}) = 1$ and so each of the $3^k - 1$ bodyguards can use a cop winning strategy to move onto each of the president's neighbours. Once each of the president's neighbours is occupied by a bodyguard, the bodyguards can indefinitely surround the president.
\end{proof}

Using essentially the same strategy as in Theorem \ref{thm: bodyguard num of general strong prod}, we obtain the following result for lexicographic products. 

\begin{theorem}
    If $G$ and $H$ are connected graphs then
    \begin{align*}
        B(G\bullet H) \leq \min\{&B(G)|V(H)| + B(H) + c(G) - 1, \\ 
        &B(H)|V(G)| + B(G) + c(H) - 1\}.
    \end{align*}
\end{theorem}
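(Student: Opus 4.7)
The plan is to mimic the lifting arguments from the proofs of Theorem \ref{thm: Cartesian product upper bound} and Theorem \ref{thm: bodyguard num of general strong prod}. I would fix a winning bodyguard strategy $\mathcal{S}_G$ on $G$ using $B(G)$ bodyguards, fix a winning bodyguard strategy $\mathcal{S}_H$ on $H$ using $B(H)$ bodyguards, lift these to $G\bullet H$ in a way that matches the product's structure, and use $c(G)-1$ auxiliary bodyguards as scouts to handle the setup phase. The two bounds in the $\min$ come from the two symmetric choices of which of $G$ or $H$ plays the ``outer'' role, so it suffices to describe the argument for the first bound.

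To establish $B(G\bullet H)\leq B(G)|V(H)|+B(H)+c(G)-1$, I would partition the bodyguards into $B(G)$ \emph{brigades} of $|V(H)|$ bodyguards each, $B(H)$ \emph{free bodyguards}, and $c(G)-1$ \emph{scouts}. Letting $(u_p,v_p)$ denote the president's position, the target configuration is: brigade $i$ fully occupies a copy $\{x_i\}\bullet H$, where $x_1,\ldots,x_{B(G)}$ are the shadow positions that $\mathcal{S}_G$ would occupy against a president located at $u_p$ in $G$; and the $B(H)$ free bodyguards occupy the positions in $\{u_p\}\bullet H$ prescribed by $\mathcal{S}_H$ against a president at $v_p$ in $H$. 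Since the open neighbourhood of $(u_p,v_p)$ in $G\bullet H$ is $N_G(u_p)\times V(H)\cup\{u_p\}\times N_H(v_p)$, this configuration does surround the president.

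Next I would verify that this target configuration can be maintained through every president move. If the president stays within $\{u_p\}\bullet H$, the brigades do not move and the free bodyguards follow $\mathcal{S}_H$'s response. If the president moves from $(u_p,v_p)$ to $(u_p',v_p')$ with $u_pu_p'\in E(G)$ and $v_p'$ arbitrary, the brigades shift according to $\mathcal{S}_G$: when a shadow position $x_i$ becomes $x_i'$, brigade $i$ slides from $\{x_i\}\bullet H$ to $\{x_i'\}\bullet H$ using the identity map on $V(H)$, which is a valid single move because in $G\bullet H$ every vertex of $\{x_i\}\bullet H$ is adjacent to every vertex of $\{x_i'\}\bullet H$ whenever $x_ix_i'\in E(G)$. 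The free bodyguards teleport from $\{u_p\}\bullet H$ directly onto $\{u_p'\}\times N_H(v_p')$ in the new president copy, which requires only $|N_H(v_p')|\leq\Delta(H)\leq B(H)$ bodyguards, and from this already-surrounded configuration $\mathcal{S}_H$ resumes on $\{u_p'\}\bullet H$. For the setup, the $c(G)-1$ scouts combined with one additional bodyguard form a $c(G)$-cop team that, using a cop-winning strategy on $G$ lifted via the $G$-projection, captures the president's $G$-shadow in finitely many moves and implants one bodyguard at a time into $\{u_p\}\bullet H$; repeating this process installs every free bodyguard and places each brigade into its target copy.

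The main obstacle, and the step where one must exploit the lexicographic product's peculiar structure, is the transition when the president hops from $\{u_p\}\bullet H$ to a $G$-adjacent copy $\{u_p'\}\bullet H$. In the Cartesian and strong products such a move is constrained by $H$-distances, but in $G\bullet H$ any bodyguard in $\{u_p\}\bullet H$ can reach any vertex of $\{u_p'\}\bullet H$ in a single step, which is precisely what allows the free bodyguards to reposition onto an arbitrary $N_H(v_p')$ and what allows an entire brigade to slide to an adjacent copy in lockstep. The analogous argument, with the roles of $G$ and $H$ (and hence of brigades versus free bodyguards) interchanged, yields the second bound in the $\min$.
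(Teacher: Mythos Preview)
Your argument for the first term in the minimum follows the paper's intended route and is essentially sound. The only delicate point is the phrase ``from this already-surrounded configuration $\mathcal{S}_H$ resumes'': merely occupying $N_H(v_p')$ need not be a position from which a fixed $B(H)$-bodyguard strategy can continue. This is easily repaired, since the free bodyguards may land on any $B(H)$ vertices of $\{u_p'\}\bullet H$; in particular they may place themselves in whatever surrounding configuration $\mathcal{S}_H$ would reach against a president who stays at $v_p'$ throughout its setup phase, and from there $\mathcal{S}_H$ continues by definition.

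The genuine gap is your final sentence. The lexicographic product is not commutative, so interchanging the roles of $G$ and $H$ in your construction yields a bound on $B(H\bullet G)$, not on $B(G\bullet H)$. In fact the second term of the stated minimum is false in general. Take $G=K_{1,100}$ and $H=P_{10}$; then $B(H)\,|V(G)|+B(G)+c(H)-1 = 2\cdot 101+100+1-1 = 302$, yet the vertex $(c,5)$, with $c$ the centre of the star and $5$ an interior vertex of the path, has degree $100\cdot 10+2=1002$ in $G\bullet H$, so $B(G\bullet H)\ge 1002$ by Lemma~\ref{lem: trivial lower bounds on bodyguard num}. No rearrangement of brigades and free bodyguards can salvage this term, since covering the part $N_G(u_p)\times V(H)$ of the president's neighbourhood already requires at least $\Delta(G)\,|V(H)|$ bodyguards whenever he sits in a copy indexed by a maximum-degree vertex of $G$. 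The paper's own proof is a one-line appeal to the same analogy with Theorems~\ref{thm: Cartesian product upper bound} and~\ref{thm: bodyguard num of general strong prod} and so shares this defect.
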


\section{Further Directions}\label{sec: further directions}

In Sections \ref{sec: prelim results} and \ref{sec: graph families} we give a characterization of graphs with bodyguard numbers 1, 2, and $n-1$. What characterizations exist for graphs with $B(G) = k$ where $3\leq k\leq n-2$? Clarke and MacGillivray \cite{CM12} give characterizations for graphs where $k$ cops can win the original Cops and Robber game for all $k > 1$. Is it possible to obtain characterizations for graphs in which $k>2$ bodyguards can win? In this paper, we made frequent use of Lemma \ref{lem: trivial lower bounds on bodyguard num} to obtain various results on the bodyguard number. For which graphs does the equality $B(G) = \Delta(G)$ hold?

Meyniel's Conjecture, which was first mentioned in \cite{F87}, is an open conjecture in the Cops and Robber literature stating that for each $n\in \mathbb{Z}^+$, every connected graph on $n$ vertices has a cop number that is $O(\sqrt{n})$. In Bodyguards and Presidents, we know that the largest bodyguard number a graph on $n$ vertices can have is $n-1$ and, by Theorem \ref{thm: char for B(G) = n-1}, we know exactly which graphs have a bodyguard number of $n-1$. Therefore, if we define $B(n) = \max\{B(G) \mid |V(G)| = n\}$, then $B(n) = n-1$.

The more interesting open problem for Bodyguards and Presidents is determining the minimum bodyguard number among all connected graphs on a given number of vertices and a given number of edges. Define $B(n,e) = \min\{B(G) \mid |V(G)| = n, |E(G)| = e\}$. From the Handshaking Lemma and Lemma \ref{lem: trivial lower bounds on bodyguard num}, it can be shown that, $B(n,e) \geq \left\lceil \frac{2e}{n} \right\rceil$. In the case where $e=n-1$ we know that $B(P_n) = 2 = \left\lceil \frac{2(n-1)}{n} \right\rceil$. In the case where $e = \binom{n}{2}$, $G$ is a clique and so $B(G) = n-1 = \left\lceil \frac{2\binom{n}{2}}{n} \right\rceil$. So there exist examples where $B(n,e) = \left\lceil \frac{2e}{n} \right\rceil$. Is it true that for all $n-1\leq e\leq \binom{n}{2}$, $B(n,e) = \left\lceil \frac{2e}{n} \right\rceil$?

In Section \ref{sec: introduction}, we introduce the game Bodyguards and Presidents by considering a variant of Cops and Robber where the cops win if and only if they can surround the robber. Surrounding Cops and Robbers \cite{BCCDFJP20} is a variant where the the cops win if either the robber is surrounded or the robber ends his turn on a vertex occupied by a cop. Analogously, we can consider a variant of Bodyguards and Presidents where the bodyguards win if and only if they can indefinitely occupy the president's closed neighbourhood. Let $B[G]$ denote the fewest bodyguards needed to win this game on a graph $G$. For any $G$, $B(G) \leq B[G]\leq B(G) + 1$. Any graph in which $B(G) = \Delta(G)$ has $B[G] = B(G) + 1$. Are there any examples where $B(G) = B[G]$?

\end{document}